\newcommand{\neutralize}[1]{\expandafter\let\csname c@#1\endcsname\count@}
\theoremstyle{plain}
\newtheorem{thmm}{Theorem}[section]
\newtheorem{thm}[thmm]{Theorem}
\newtheorem*{thm*}{Theorem}
\newtheorem{lem}[thmm]{Lemma}
\newtheorem{lem-def}[thmm]{Lemma-Definition}
\newtheorem{claim}[thmm]{Claim}
\newtheorem{pro}[thmm]{Proposition}
\newtheorem{pro-def}[thmm]{Proposition-Definition}
\newtheorem{cor}[thmm]{Corollary}
\newtheorem{que}[thmm]{Question}
\theoremstyle{definition}
\newtheorem{Def}[thmm]{Definition}
\newtheorem{rem}[thmm]{Remark}
\theoremstyle{remark}
\newcommand{\ssec}{\subsection}
\newcommand{\ti}[1]{\tilde{#1}}
\newcommand{\ul}{\underline}
\newcommand{\vast}{\bBigg@{4}}
\newcommand{\Vast}{\bBigg@{5}}
\newcommand{\wt}{\widetilde}
\newcommand\reallywidehat[1]{%
\savestack{\tmpbox}{\stretchto{%
  \scaleto{%
    \scalerel*[\widthof{\ensuremath{#1}}]{\kern-.6pt\bigwedge\kern-.6pt}%
    {\rule[-\textheight/2]{1ex}{\textheight}}
  }{\textheight}%
}{0.5ex}}%
\stackon[1pt]{#1}{\tmpbox}%
}
\definecolor{armygreen}{rgb}{0.29, 0.33, 0.13}
\definecolor{ao(english)}{rgb}{0.5, 0.2, 0.0}
\newcommand{\bC}{\mathbf{C}}
\newcommand{\bH}{\mathbf{H}}
\newcommand{\bQ}{\mathbf{Q}}
\newcommand{\bR}{\mathbf{R}}
\newcommand{\bV}{\mathbf{V}}
\newcommand{\bZ}{\mathbf{Z}}
\newcommand{\bm}{\mathbf{m}}
\newcommand{\cA}{\mathcal{A}}
\newcommand{\cB}{\mathcal{B}}
\newcommand{\cE}{\mathcal{E}}
\newcommand{\cF}{\mathcal{F}}
\newcommand{\cH}{\mathcal{H}}
\newcommand{\cJ}{\mathcal{J}}
\newcommand{\cO}{\mathcal{O}}
\newcommand{\cU}{\mathcal{U}}
\newcommand{\cV}{\mathcal{V}}
\newcommand{\cX}{\mathcal{X}}
\newcommand{\fU}{\mathfrak{U}}
\newcommand{\gD}{\Delta}
\newcommand{\gO}{\Omega}
\newcommand{\gS}{\Sigma}
\newcommand{\ga}{\alpha}
\newcommand{\gb}{\beta}
\newcommand{\gd}{\delta}
\newcommand{\gl}{\lambda}
\newcommand{\gs}{\sigma}
\newcommand{\gt}{\theta}
\newcommand{\Alb}{\mathrm{Alb}}
\newcommand{\coker}{\mathrm{coker}}
\newcommand{\Cone}{\mathrm{Cone}}
\newcommand{\Gal}{\mathrm{Gal}}
\newcommand{\Id}{\mathrm{Id}}
\newcommand{\Ima}{\mathrm{Im}}
\newcommand{\lcm}{\mathrm{lcm}}
\newcommand{\pr}{\mathrm{pr}}
\newcommand{\sing}{\mathrm{sing}}
\newcommand{\tors}{\mathrm{tors}}
\newcommand{\tr}{\mathrm{tr}}
\newcommand{\cf}{\emph{cf.} }
\newcommand{\eg}{\emph{e.g.} }
\newcommand{\bss}{\backslash}
\newcommand{\colonec}{\mathrel{:=}}
\newcommand{\cnec}{\mathrel{:=}}
\newcommand{\st}{\star}
\renewcommand{\(}{\left(}
\renewcommand{\)}{\right)}
\newcommand{\dto}{\dashrightarrow}
\newcommand*\eto{%
  \xrightarrow[]{\raisebox{-0.25 em}{\smash{\ensuremath{\sim}}}}%
}
\newcommand{\hto}{\hookrightarrow}
\newcommand{\xto}[1]{\xrightarrow{ #1 }}
\let\orgdescriptionlabel\descriptionlabel
\renewcommand*{\descriptionlabel}[1]{%
  \let\orglabel\label
  \let\label\@gobble
  \phantomsection
  \edef\@currentlabel{#1}%
  \let\label\orglabel
  \orgdescriptionlabel{#1}%
}
\tikzset{node distance=2cm, auto}
\numberwithin{equation}{section}
\title{Algebraic approximations of fibrations in abelian varieties over a curve} 
\author{Hsueh-Yung Lin}
\address{Department of Mathematics, National Taiwan University, 
	No. 1, Sec. 4, Roosevelt Rd., Taipei 10617, Taiwan.}
\email{hsuehyunglin@ntu.edu.tw}
\begin{document}

\begin{abstract}
For every fibration $f : X \to B$ with $X$ a compact K\"ahler manifold, $B$ a smooth projective curve, and a general fiber of $f$ an abelian variety, we prove that $f$ has an algebraic approximation.
\end{abstract}

\maketitle

\section{Introduction}\label{sec-intro}

Let $X$ be a compact K\"ahler manifold. An \emph{algebraic approximation of $X$} is a deformation $\cX \to \gD$ of $X$ such that, up to shrinking $\gD$, the subset parameterizing projective manifolds in this family is dense in $\gD$. Whether or not a compact K\"ahler manifold admits an algebraic approximation is generally known as the Kodaira problem.  For compact K\"ahler surfaces, Kodaira showed that algebraic approximations always exist~\cite{KodairaSurfaceII}. But starting from dimension 4 and on, there exist compact K\"ahler manifolds constructed by Voisin which do not have any algebraic approximation~\cite{Voisincs}.

For most compact K\"ahler manifolds, the existence of algebraic approximations is still unknown. Non-trivial examples of compact K\"ahler manifolds admitting algebraic approximations can be found in~\cite{Schrackdefo, CaoJApproxalg, GrafDefKod0, ClaudonToridefequiv, ClaudonHorpi1, HYLkodfibellip} and the list is rather exhaustive at present. This list includes especially elliptic fibrations~\cite{HYLkodfibellip} and smooth fibrations in abelian varieties~\cite{ClaudonToridefequiv} (both over a projective manifold), and they therefore motivate the following question, which we will study in this text.

\begin{que}\label{que-appalgfibab}
Let $f : X \to B$ be a fibration whose general fiber is an abelian variety. Assume that $X$ is a compact K\"ahler manifold  and $B$ a projective manifold, does $f$ have an algebraic approximation?
\end{que}

In the case where $f$ is smooth, some algebraic approximation of $f$ can be realized by the so-called \emph{tautological family} associated to $f$~\cite{ClaudonToridefequiv}, which is described as follows. Let $J \to B$ denote the Jacobian fibration associated to $f$ and $\cJ$ its sheaf of germs of local holomorphic sections. There is a one-to-one correspondence between the set of isomorphism classes of $J$-torsors (resp. projective $J$-torsors) and $H^1(B,\cJ)$  (resp. $H^1(B,\cJ)_{\tors}$), and the tautological family associated to $f$ is a family of $J$-torsors
$$
\ \ \ \ \ \ \ \ \ \ \ \ \ \ \ \ \ \ 
\Pi : \cX \xto{q}  B \times V \to V \cnec H^1(B, R^{g}f_*\gO^{g-1}_{X/B}) \ \ \ \ \ \ \ \ (g \cnec \dim X - \dim B)
$$
  such that $t \in V$ parameterizes the $J$-torsor represented by
$$\eta(f) + \exp(t) \in H^1(B,\cJ)$$
where $\eta(f) \in H^1(B,\cJ)$ is the element associated to $f$ and 
$$\exp : V \to H^1(B,\cJ)$$ 
is the map induced by the quotient $R^{g}f_*\gO^{g-1}_{X/B} \to \cJ$. An argument involving Hodge theory shows that $V$ contains a dense subset parameterizing projective $J$-torsors~\cite{ClaudonToridefequiv}.

When $f: X \to B$ is an elliptic fibration, the situation is much more complicated but the construction of an algebraic approximation of $f$ in~\cite{HYLkodfibellip} is still based on the tautological family associated to some specific elliptic fibration and Hodge theory is again involved in the proof of the density result. This raises the question of whether we can construct, more generally, for every fibration whose general fiber is an abelian variety, the analogue of tautological family and prove (by Hodge-theoretic arguments) that projective members are dense in this family, thereby answering Question~\ref{que-appalgfibab}.

In this text, we will focus on fibrations $f : X \to B$ with $\dim B = 1$ and show that the above approach can indeed be carried out  in this situation.  Due to the presence of singular fibers,  the proof is technically more involved than in the smooth case and the implementation of the idea relies on work of Deligne, El Zein, and Zucker on variations of Hodge structures over a curve~\cite{ZuckerHdgL2, ElZeinZuck} and M. Saito's compactifications of Jacobian fibrations~\cite{MSaitoAdmnorm}. We will first show that under the assumption that $f$ has local sections at every point of $B$, 
we can construct a tautological family associated to $f$
 (see Proposition-Definition~\ref{pro-def-existfam}) and prove that it is an algebraic approximation.

\begin{thm}\label{thm-AbFibDefprec} 
Let $G$ be a finite group and let $f : X \to B$ be a $G$-equivariant fibration 
with $X$ being a compact K\"ahler manifold, $B$ a smooth projective curve, and a general fiber of $f$ an abelian variety. Assume that $f$ has local sections at every point of $B$, then the $G$-equivariant tautological family 
$$\Pi :  \cX \to B \times V \to V  $$
associated to $f$ is an algebraic approximation of $f$
which is $G$-equivariantly locally trivial over $B$ (see Definition~\ref{def-loctriv}). 
\end{thm}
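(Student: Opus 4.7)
The plan is to reduce the theorem to a classical density-of-Hodge-loci argument, since the $G$-equivariant local triviality of $\Pi$ is essentially built into the construction of the tautological family: as $t$ varies in $V$, the fibers $\cX_t \cnec \Pi^{-1}(t)$ share the same underlying smooth $G$-manifold and the same projection to $B$, with only the complex structure twisted according to $\exp(t) \in H^1(B, \cJ)$. Consequently, the local systems $t \mapsto H^k(\cX_t, \bQ)$ are canonically constant on $V$, and only the Hodge filtrations $F^\bullet H^k(\cX_t, \bC)$ depend on $t$. What remains to be shown is the density in $V$ of the subset parameterizing projective fibers.

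First I would formulate a Hodge-theoretic projectivity criterion. Since $\cX_t \to B$ is a compact K\"ahler fibration over a projective curve with abelian variety general fiber, $\cX_t$ is projective if and only if it carries a $G$-invariant line bundle ample on the general fiber; equivalently, if and only if there exists a $G$-invariant class in $H^2(\cX_t, \bQ)^G \cap F^1 H^2(\cX_t, \bC)$ restricting to a polarization on the general fiber. Because $B$ is a curve, the Leray spectral sequence for $\cX_t \to B$ degenerates at the relevant range, so every $G$-invariant polarization class $\lambda \in H^0(B, R^2 f_* \bQ)^G$ admits a rational Leray lift $\tilde\lambda_t \in H^2(\cX_t, \bQ)^G$; what depends on $t$ is only whether $\tilde\lambda_t$ lies in $F^1$.

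Next I would describe the infinitesimal variation of $F^\bullet H^2(\cX_t)$ as $t$ moves in $V$. Under the constant trivialization of $H^2(-, \bQ)$, this variation is induced by the exponential map $\exp$; via the theory of generalized intermediate Jacobians of Deligne, El Zein, and Zucker~\cite[Chapters XIV, XV]{Griffiths} and Zucker's identification of the $L^2$-extended VHS on $B$~\cite{ZuckerHdgL2}, the derivative at $t$ along a tangent vector $\xi \in T_t V \cong V$ can be computed as a pairing of $\xi$ against a Kodaira-Spencer-type class, landing in $H^2(\cX_t, \bC)/F^1$. In particular, each Hodge locus $\{\, t \in V : \tilde\lambda_t \in F^1 H^2(\cX_t, \bC)\,\}$ is a closed analytic subvariety of $V$, and the theorem reduces to showing that the countable union of such loci, taken over all $G$-invariant polarization classes $\lambda$, is dense in $V$.

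The hard part will be the infinitesimal surjectivity underlying this density statement: one must verify that the derivative of the period map from $V$ is transverse in enough directions of $H^2(\cX_t, \bC)/(F^1 + H^2(\cX_t, \bR))$ to force Hodge loci to accumulate at every point. I would attack this by combining the explicit description $V = H^1(B, R^g f_* \gO^{g-1}_{X/B})$, the definition of $\exp$ via the quotient $R^g f_* \gO^{g-1}_{X/B} \to \cJ$, and Zucker's $L^2$ Hodge theory on $B$ to compute the relevant pairing; the hypothesis that $f$ has local sections at every point of $B$ is precisely what ensures that these sheaves and the $L^2$ theory behave uniformly across singular fibers. Once the infinitesimal surjectivity is in hand, a standard density argument for Hodge loci yields the density of projective members, and together with the built-in $G$-equivariant local triviality, this gives the theorem.
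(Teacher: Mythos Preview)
Your approach is genuinely different from the paper's, and the step you flag as ``the hard part'' is not actually carried out, so the proposal has a real gap.

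The paper never studies the variation of $F^\bullet H^2(\cX_t,\bC)$ or invokes a density-of-Hodge-loci argument on the total space. Instead it works directly with the class $\eta(f)\in H^1(B,j_*\cJ)$ and the torsion criterion: by Proposition-Definition~\ref{pro-def-existfam} part iii), if $\eta(f)+\exp(t)$ is torsion then $\cX_t\to B$ has a multi-section, and Campana's criterion (Corollary~\ref{cor-multsecMoibase}) then gives projectivity. The two substantive inputs are: (a) some multiple $m\eta(f)$ lifts to an element $\gb\in H^1(B,\bar{\cE})^G$ (Lemma~\ref{lem-liftG}, proved using the Leray spectral sequence of $f^\st$ over the \emph{affine} open $B^\st$, where $E_2^{p,q}$ vanishes for $p\ge 2$, together with the compatibility of Lemma~\ref{lem-comm}); and (b) the image of $H^1(B,j_*\bH)^G_\bQ$ is dense in $V$ (Lemma~\ref{lem-dense}), which is a one-line consequence of Zucker's theorem that $H^1(B,j_*\bH)$ carries a weight-$2g$ Hodge structure concentrated in the three middle bidegrees (Corollary~\ref{Cor-Zuck}). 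Combining (a) and (b), the translate $\phi\bigl(H^1(B,j_*\bH)^G_\bQ\bigr)-\nu(\gb)/m$ is dense in $V$ and parameterizes torsion classes, hence projective fibers.

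By contrast, your route requires: (1) a projectivity criterion in terms of a rational $(1,1)$-class on $\cX_t$ restricting to a polarization on the general fiber, which is essentially a repackaging of the multi-section criterion and needs its own argument; (2) a Leray lift of such a class from $H^0(B,R^2f_*\bQ)$ to $H^2(\cX_t,\bQ)$, which is not immediate over the \emph{compact} curve $B$ since the $d_2$-map $E_2^{0,2}\to E_2^{2,1}$ need not vanish in the absence of a decomposition theorem (you only have degeneration over the affine $B^\st$); and most importantly (3) a transversality statement for the period map $t\mapsto F^1H^2(\cX_t,\bC)$ that you do not prove. Showing that the derivative of this period map spans enough of $H^2/(F^1+H^2_\bR)$ is precisely where the content lies, and unwinding it via the Leray filtration and the exponential map would in effect reconstruct the torsion argument above. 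The paper's approach localizes all the Hodge theory to the single clean statement of Corollary~\ref{Cor-Zuck} about $H^1(B,j_*\bH)$ and avoids touching $H^2$ of the total space altogether.
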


Without assuming that the fibration $f : X \to B$ has local sections everywhere in $B$, the existence of an algebraic approximation of $f$ will follow as a corollary of Theorem~\ref{thm-AbFibDefprec}, which answers Question~\ref{que-appalgfibab} in the affirmative when $\dim B = 1$.

\begin{cor}\label{cor-AbFibDef} 
Let $f : X \to B$ be a fibration with $X$ being a compact K\"ahler variety, $B$ a smooth projective curve, and a general fiber of $f$ an abelian variety. Then $f$ has an algebraic approximation 
which is locally trivial over $B$.
\end{cor}

Our original motivation of this work comes from the long-standing Kodaira problem in dimension 3 which we will study in~\cite{HYLkod3}. Roughly speaking, given a non-uniruled compact K\"ahler threefold $X$ of algebraic dimension $a(X) = 1$, the algebraic reduction of $X$ is bimeromorphic to a fibration $f: X' \to B$ such that either $f$ is isotrivial or a general fiber of $f$ is an abelian surface~\cite{FujikiStruC}. In the latter case, the existence of algebraic approximations of $X$ will be a consequence of Corollary~\ref{cor-AbFibDef}.  The reader is referred to~\cite{HYLkod3} for the detail.

In order to explain the basic idea of the proof of Theorem~\ref{thm-AbFibDefprec} without dealing with technical difficulties,  we will first recall (following~\cite{ClaudonToridefequiv}) in Section~\ref{sec-fiblisse} the construction of the tautological family $\Pi$ associated to a smooth torus fibration $f$, together with a sketch of proof of the fact that $\Pi$ is an algebraic approximation under the assumption that the base and the fibers of $f$ are projective~\cite[Theorem 1.2]{ClaudonToridefequiv}.  We will then construct in Section~\ref{sec-genjac} the tautological families associated to fibrations as in Theorem~\ref{thm-AbFibDefprec},  and prove Theorem~\ref{thm-AbFibDefprec} and Corollary~\ref{cor-AbFibDef} in Section~\ref{sec-appalgtau}.

\section{Preliminaries and general results}\label{sec-prelim}

\ssec{Basic notions and terminologies}
\hfill

In this text, a \emph{fibration} is a proper holomorphic surjective map $f : X \to B$ whose general fiber is irreducible. The fiber $f^{-1}(b)$ of $f$ over $b \in B$ will often be denoted by $X_b$. A \emph{local section} of a fibration $f$ at a point $b \in B$ is a section of $f^{-1}(U) \to U$ for some (Euclidean) neighborhood $U \subset B$ of $b$. A \emph{multi-section} of $f$ is a subvariety $\gS \subset X$ such that $f_{|\gS}$ is surjective, finite, and flat over $B$. 

A deformation of a compact complex variety $X$ is a 
surjective, proper, and flat morphism $\Pi : \cX \to \gD$ containing $X$ as a fiber. 
Let $f : X \to B$ be a holomorphic map 
from a compact complex variety $X$.  
A \emph{deformation of $f$} (with fixed target) is a family of maps $f_t : \cX_t \to B$ containing $f$ as a member. Namely, it is a composition 
$$\Pi : \cX \xto{q} B \times \gD \xto{\pr_2} \gD$$ 
such that  $\Pi$ is a deformation of $X$ and $q_{|\cX_o} : \cX_o \to B$ equals $f$ for some $o \in \gD$. Note that in the definition, the target of $f_t$ does not deform in the family.

Let $G$ be a group and 
$X$ a compact complex variety endowed with a $G$-action. 
We say that a deformation $\Pi : \cX \to \gD$ of $X$ \emph{preserves the $G$-action}, or $\Pi$ is a \emph{$G$-equivariant deformation of $X$}, if there exists a $G$-action on $\cX$ extending the given $G$-action on $X$ such that $\Pi$ is $G$-invariant. Similarly, let $f : X \to B$ be a $G$-equivariant map. We say that a deformation $\Pi : \cX \xto{q}  B \times \gD  \to \gD$ of $f$ \emph{preserves the $G$-action} (or is \emph{$G$-equivariant}) if there exists a $G$-action on $\cX$ extending the $G$-action on $f$ such that $q$ is $G$-equivariant (the $G$-action on $B \times \gD$ being the pullback of the given $G$-action on $B$) and $\Pi : \cX \to \gD$ is $G$-invariant.

\begin{Def}[Locally trivial deformations]\label{def-loctriv}
\hfill
\begin{enumerate}[label = \roman{enumi})]
\item\label{def-loctriv2} A deformation $\Pi : \cX \xto{q} B \times \gD \to \gD$ of $f : X \to B$ is called \emph{locally trivial over $B$} if there exists an open cover $\{U_i\}$ of $B$ such that $q^{-1}(U_i \times \gD) \simeq f^{-1}(U_i) \times \gD$ over $U_i \times \gD$ for every $i$.
\item In i), let $G$ be a group and $f : X \to B$ a $G$-equivariant map. We say that $\Pi$ is \emph{$G$-equivariantly locally trivial over $B$} if $\Pi$ preserves the $G$-action and  the isomorphisms $q^{-1}(U_i \times \gD) \simeq f^{-1}(U_i) \times \gD$ above are $G$-equivariant for some $G$-invariant open cover $\{U_i\}$ of $B$.
\end{enumerate}
\end{Def}

An obvious property about locally trivial deformations is that the quotient of a $G$-equivariantly locally trivial deformation is still  locally trivial.

\begin{lem}\label{lem-Gquotloctriv}
	Let $G$ be a finite group and let $f : X \to B$ be a $G$-equivariant fibration.
If $\Pi : \cX \xto{q} B \times \gD \to \gD$ is a  deformation of  
$f : X \to B$ which is $G$-equivariantly  locally trivial over $B$, then the quotient 
$$\Pi' : \cX/G \xto{q'} (B/G) \times \gD \to \gD$$ 
of $\Pi$ by $G$ is a deformation of $f' : X/G \to B/G$ which is locally trivial over $B/G$.
\end{lem}
\begin{proof}

By assumption, there exists a $G$-invariant open cover $\{U_i\}$ of $B$ such that $q^{-1}(U_i \times \gD)$ is $G$-equivariantly isomorphic to $f^{-1}(U_i) \times \gD$ over $U_i \times \gD$. Therefore if $U'_i \subset B/G$ denotes the image of $U_i$ in $B/G$, then $\{U_i'\}$ is an open cover of $B/G$ and we have 
an isomorphism
$$q'^{-1}(U_i' \times \gD) = q^{-1}(U_i \times \gD)/G \simeq (f^{-1}(U_i)  \times \gD) / G = f'^{-1}(U'_i)  \times \gD,$$
over $U'_i \times \gD$
where, by abuse of notation, $q^{-1}(U_i \times \gD)/G$ denotes the image of $q^{-1}(U_i \times \gD)$ in $\cX/G$ and same for $(f^{-1}(U_i)  \times \gD) / G$.
\end{proof}%

Now we come to the notion of algebraic approximation. Recall that a compact complex variety $X$ is called Moishezon if its algebraic dimension $a(X)$ is equal to $\dim X$.

\begin{Def}[Algebraic approximation]\label{def-appalg}
	Let $X$ be a compact complex variety. An algebraic approximation of $X$ is a deformation $\Pi : \cX \to \gD$ of $X$ such that up to shrinking $\gD$, the subset of points in $\gD$ parameterizing Moishezon varieties is dense for the Euclidean topology.  
\end{Def}

When $X$ is a compact K\"ahler manifold, Definition~\ref{def-appalg} 
coincides with the definition given
at the beginning of the introduction:
this follows from Moishezon's projectivity criterion, 
together with the fact that
small deformations of a compact K\"ahler manifold 
remain compact K\"ahler manifolds.

\ssec{Equivariant K\"ahler desingularizations}

\begin{Def}\label{def-mindesingG}
	Let $X$ be a compact K\"ahler variety endowed with a $G$-action for some group $G$. 
	A \emph{strong $G$-equivariant K\"ahler desingularization} of $X$ 
	is a bimeromorphic holomorphic map $\nu : \ti{X} \to X$ satisfying the following properties
	\begin{enumerate}[label = \roman{enumi})]
		\item $\ti{X}$ is a compact K\"ahler manifold.
		\item $\nu : \ti{X} \to X$ is projective. 
		Moreover, if $X_{\sing} \subset X$ denotes the singular locus, 
		then the restriction 
		$\ti{X} \bss\nu^{-1}(X_{\sing}) \to X \bss X_{\sing}$ of $\nu$ is an isomorphism.
		\item There exists a $G$-action on $\ti{X}$ such that $\nu$ is $G$-equivariant.
	\end{enumerate}
\end{Def}

As an immediate consequence of the existence of functorial resolution of singularities
(see \eg~\cite[Theorem 3.45]{resing} 
and~\cite[Chapter 3]{resing} for a historical account of this result starting from 
Hironaka's desingularization; see also~\cite{MR2500573}
for the existence of desingularizations of complex analytic spaces),
strong equivariant K\"ahler desingularizations always exist.

\begin{thm}\label{thm-mindesingG}
	Let $X$ be a compact K\"ahler variety endowed with a $G$-action. 
	A strong $G$-equivariant K\"ahler desingularization of $X$ always exists.
\end{thm}

\begin{proof}
	The existence of a desingularization $\nu:  \ti{X} \to X$ of $X$
	satisfying ii) and iii) in Definition~\ref{def-mindesingG}
	follows from the existence of 
	functorial resolution of singularities (see \eg~\cite[Theorem 3.45]{resing}).
	Since projective morphisms are K\"ahler~\cite[Lemma 4.4.(1)]{FujikiClosednessDouady},
	it follows from~\cite[Proposition II.1.3.1.(v) and (vi)]{VarouchasKS} 
	that $\ti{X}$ is K\"ahler.
\end{proof}

\ssec{Campana's criterion}\hfill

Let $X$ be a complex variety. We say that $X$ is \emph{algebraically connected} if a general pair of points $x,y \in X$ is contained in a compact connected (but not necessarily irreducible) curve of $X$. We have the following criterion proven by Campana for a variety to be Moishezon in terms of algebraic connectedness. 

\begin{thm}[Campana {\cite[Corollaire on p.212]{CampanaCored}}]\label{thm-algconn}
Let $X$ be a compact complex variety bimeromorphic to a compact K\"ahler manifold. Then $X$ is Moishezon if and only if $X$ is algebraically connected. 
\end{thm}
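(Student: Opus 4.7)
The plan is to handle the two implications separately; essentially all the content is concentrated in the converse direction. For Moishezon $\Rightarrow$ algebraically connected, let $\pi : \wt X \to X$ be a bimeromorphic modification with $\wt X$ projective. Fixing an embedding $\wt X \hto \bP^N$ and intersecting $\wt X$ with a generic complete intersection of hyperplanes of appropriate codimension produces a covering family of irreducible curves through which any pair of general points of $\wt X$ is joined; pushing forward via $\pi$ yields algebraic connectedness of $X$.

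For the converse (algebraically connected $\Rightarrow$ Moishezon), since both hypothesis and conclusion are bimeromorphic invariants, I would reduce to the case where $X$ itself is a compact K\"ahler manifold. Let $a : X \dto A$ denote the algebraic reduction of $X$, so that $A$ is Moishezon with $\dim A = a(X)$ and a general fiber $F$ of $a$ satisfies $a(F) = 0$. The aim is to show $\dim A = \dim X$. I argue by contradiction: supposing $\dim A < \dim X$, so that $\dim F > 0$, I try to show that a general fiber $F$ is itself algebraically connected. Since $F$ inherits a K\"ahler structure from $X$, induction on $\dim X$ then forces $F$ to be Moishezon, contradicting $a(F) = 0 < \dim F$.

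To produce the algebraic connectedness of $F$ I would use the Barlet cycle space. By the algebraic connectedness of $X$, there exists an irreducible component $T$ of $\cB(X)$ with universal family $\cC \to T$ and evaluation $e : \cC \to X$ such that $(e, e) : \cC \times_T \cC \to X \times X$ is dominant; by Fujiki--Lieberman, $T$ is a compact K\"ahler space. Pulling back the covering family along the diagonal $\gD_A \subset A \times A$ via $a \times a$ should, above a generic point of $\gD_A$ corresponding to a general fiber $F$, provide a family of compact connected curves contained in $F$ and covering $F \times F$.

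The principal obstacle is precisely this last pullback step: a priori, a curve $C_t \subset X$ joining two points $x, y$ of a common fiber $F$ need not be contained in $F$. The heart of the argument is to extract a genuine covering family of connected compact curves inside general fibers of $a$; this is the technical core of Campana's proof and relies essentially on the K\"ahler hypothesis to keep the relevant components of the cycle space compact and to control the interaction of covering families of curves with the algebraic reduction.
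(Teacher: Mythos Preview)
The paper gives no proof of this statement at all: it is quoted verbatim from Campana's paper on the \emph{cor\'eduction alg\'ebrique} and used as a black box. So there is nothing in the paper to compare your argument against.

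On the substance of your sketch: the forward implication is fine. For the converse, your strategy via the \emph{algebraic reduction} $a : X \dashrightarrow A$ runs into exactly the obstacle you name, and it is a genuine one. There is no reason a connecting curve between two points of a general fibre $F$ should stay in $F$; pulling back along the diagonal $\Delta_A$ does not by itself produce curves \emph{in} $F$, only curves whose endpoints lie over the same point of $A$. Without resolving this, the induction on $\dim X$ never gets started.

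Campana's actual argument does not go through the algebraic reduction but through the \emph{algebraic coreduction} $r : X \to R(X)$, which is engineered in the opposite direction: its general fibres are, by construction, the maximal algebraically connected subvarieties of $X$. The hard work (and the place where the K\"ahler hypothesis is used, via compactness of components of the Barlet space) is in showing that these fibres are Moishezon. Once that is known, ``$X$ algebraically connected'' means $R(X)$ is a point, hence $X$ itself is Moishezon. So the logical flow is inverted relative to your sketch: one does not try to descend algebraic connectedness to fibres of a given fibration, but rather builds a fibration whose fibres are algebraically connected for free and then proves they are Moishezon.
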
%

Together with Moishezon's criterion, Theorem~\ref{thm-algconn} implies that a compact complex manifold $X$ is projective if and only if $X$ is K\"ahler and algebraically connected.
Since we will mainly deal with fibrations $f : X \to B$ with  $\dim B = 1$, here is a variant of Campana's criterion in this particular situation.

\begin{cor}[Special case of Campana's criterion]\label{cor-multsecMoibase} 
Let $f : X \to B$ be a fibration from a compact K\"ahler variety (resp. manifold) 
to a smooth projective curve. 
Assume that a general fiber of $f$ is Moishezon (resp. projective), 
then $X$ is Moishezon (resp. projective) if and only if $f$ has a multi-section.
\end{cor}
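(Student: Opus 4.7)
The plan is to prove the two implications separately, with the nontrivial direction being \gts{multi-section $\Rightarrow$ projective}.

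For the direct direction, suppose $X$ is projective and set $n = \dim X$. I would take $n - 1$ general very ample divisors $H_1, \ldots, H_{n-1}$ on $X$; by Bertini their intersection $C$ is an irreducible curve. Since a general fiber of $f$ has dimension $n - 1 \geq 1$, the intersection of the $H_i$ with any fiber is $0$-dimensional, so $C$ is not contained in any fiber and $f|_C : C \to B$ is surjective and generically finite, yielding a multi-section.

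For the converse, assume $f$ admits a multi-section $\Sigma$, while keeping the hypotheses that $X$ is K\"ahler and that the general fiber of $f$ is projective. The strategy is to invoke Campana's criterion (Theorem~\ref{thm-algconn}): it suffices to show that $X$ is algebraically connected, since then $X$ will be both K\"ahler and Moishezon, hence projective. After replacing $\Sigma$ by an irreducible component that still surjects onto $B$ (which exists because $f|_\Sigma$ is generically finite), I may assume $\Sigma$ is an irreducible, hence compact connected, curve in $X$. Let $(x, y) \in X \times X$ be a general pair. Then $b_1 \cnec f(x)$ and $b_2 \cnec f(y)$ are general points of $B$, so the fibers $F_1 \cnec X_{b_1}$ and $F_2 \cnec X_{b_2}$ are projective by hypothesis. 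Pick $p \in \Sigma \cap F_1$ and $q \in \Sigma \cap F_2$; these nonempty finite sets exist because $f|_\Sigma$ is surjective and generically finite. Inside each projective fiber, I would then join $x$ to $p$ by a compact connected algebraic curve $C_x \subset F_1$, and similarly $y$ to $q$ by $C_y \subset F_2$. The union $C_x \cup \Sigma \cup C_y$ is then a compact connected algebraic curve in $X$ containing both $x$ and $y$, establishing the algebraic connectedness of $X$.

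The main subtle point is the need to connect the generic points $x, y$ to the \emph{specific} points $p, q$ lying on $\Sigma$ inside the projective fibers $F_1, F_2$. This is stronger than the mere \gts{general pair} statement in the definition of algebraic connectedness, but it is standard: after embedding $F_1 \hookrightarrow \bP^N$ via a very ample line bundle, a general linear subspace of codimension $\dim F_1 - 1$ passing through $x$ and $p$ cuts out, by an appropriate Bertini-type argument, a (connected) algebraic curve on $F_1$ containing both points; the same applies to $F_2$. Once this classical fact is in place, the argument above goes through, and Moishezon's theorem combined with Campana's criterion concludes the proof.
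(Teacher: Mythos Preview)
Your argument is correct and matches the paper's intent: the paper states this corollary without proof, presenting it simply as a \gts{variant of Campana's criterion} obtained by combining Theorem~\ref{thm-algconn} with Moishezon's theorem. Your write-up supplies exactly the details the paper leaves implicit --- producing a multi-section from very ample divisors in the projective case, and conversely linking a general pair of points via curves in the projective fibers and the multi-section to verify algebraic connectedness --- so there is nothing to correct or contrast.
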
%

\section{Smooth torus fibrations and their tautological families}\label{sec-fiblisse}

The heuristic proving Theorem~\ref{thm-AbFibDefprec} is originated from the smooth case~\cite{ClaudonToridefequiv}, although the basic idea can be traced back to Kodaira~\cite{KodairaSurfaceII} in his studies of deformations of elliptic surfaces. So in this section, we will recall the construction of the tautological families associated to smooth torus fibrations and explain how (assuming the fibers and the bases of the fibrations are projective) these families are proven to be algebraic approximations following~\cite{ClaudonToridefequiv}.

Let $f : X \to B$ be a smooth torus fibration of relative dimension $g$ and  $J \to B$  the Jacobian fibration associated to $f$. The sheaf $\cJ_{\bH/B}$ of germs of holomorphic sections of $J \to B$ lies in the exact sequence
\begin{equation}\label{SE-Jac}
\begin{tikzcd}[cramped, row sep = 5, column sep = 40]
0 \ar[r] & \bH  \arrow[r]  & \cE_{\bH/B} \ar[r,"\exp"] & \cJ_{\bH/B} \ar[r] & 0 
\end{tikzcd}
\end{equation}
where $\bH \colonec R^{2g-1}f_*\bZ$ and 
$$\cE_{\bH/B} \colonec (\bH   \otimes \cO_{B}) / R^{g-1}f_*\gO^g_{X/B} \simeq  R^gf_*\gO^{g-1}_{X/B}.$$ 
Every morphism $\phi : B' \to B$ induces a map $\cJ_{\bH/B} \to  \phi_*\cJ_{\phi^{-1}\bH/B'}$ by pulling back sections.

The fibration $f$ is a $J$-torsor and to each isomorphism class of $J$-torsors, we can associate in a biunivocal way an element $\eta(f) \in H^1(B,\cJ_{\bH/B})$ satisfying the property that $\eta(f)$ is torsion if and only if $f$ has a multi-section~\cite[Proposition 2.2]{ClaudonToridefequiv}. Moreover, if  
$$\exp : H^1(B ,\cE_{\bH/B}) \to H^1(B, \cJ_{\bH/B})$$
 denotes the morphism induced by $\exp : \cE_{\bH/B} \to \cJ_{\bH/B}$, then there exists a family
\begin{equation}\label{fam-Jtors}
\Pi : \cX \xto{q}  B \times V \to V \cnec H^1(B ,\cE_{\bH/B})
\end{equation}
of $J$-torsors such that  $t \in V$ parameterizes the $J$-torsor which corresponds to 
$$\eta(f) + \exp(t) \in H^1(B, \cJ_{\bH/B}).$$ 
 The family $\Pi$ is called the \emph{tautological family} associated to $f$.

Concretely, $\Pi$ is constructed as follows. Let  $\pr_1 : B \times V \to B$ be the first projection and let
$$\xi \in H^1(B, \cE_{\bH/B}) \otimes V^\vee \subset H^1(B, \cE_{\bH/B}) \otimes H^0(V,\cO_V) \simeq  H^1(B \times V, \cE_{\pr_1^{-1}\bH/B \times V})$$
 be the element which corresponds to the identity $\Id : V \to H^1(B, \cE_{\bH/B})$. Let 
$$\gt \cnec \pr_1^*\eta(f) + \wt{\exp}(\xi) \in  H^1(B \times V, \cJ_{\pr_1^{-1}\bH/B \times V})$$
where $\pr_1^* : H^1(B, \cJ_{\bH/B}) \to  H^1(B \times V, \cJ_{\pr_1^{-1}\bH/B \times V})$ is the map induced by $\cJ_{\bH/B} \to {\pr_1}_*\cJ_{\pr_1^{-1}\bH/B \times V}$ and 
$$ \wt{\exp} : H^1(B, \cE_{\pr_1^{-1}\bH/B \times V}) \to H^1(B \times V, \cJ_{\pr_1^{-1}\bH/B \times V})$$
the map induced by $\exp : \cE_{\pr_1^{-1}\bH/B \times V} \to \cJ_{\pr_1^{-1}\bH/B \times V}$. Then the smooth torus fibration $q : \cX \to B \times V$ defining $\Pi$ is defined to be the $(J \times B)$-torsor corresponding to $\gt$. As $\gt$ can be represented by a \v{C}ech 1-cocycle $(\gt_{ij})$ with respect to an open cover of $B \times V$ of the form $\{U_i \times V\}$ where $\{U_i \}$ is a good open cover of $B$, it follows that $\Pi$ is locally trivial over $B$.

In the case where $f : X \to B$ is a $G$-equivariant smooth torus fibration for some finite group $G$, there is a natural $G$-action on~\eqref{SE-Jac}. The sub-family of~\eqref{fam-Jtors} over $V^G$ is a deformation of $f : X \to B$ preserving the $G$-action~\cite[Proposition 2.10]{ClaudonToridefequiv}, called the \emph{$G$-equivariant tautological family} associated to $f$.

\begin{thm}[{Claudon~\cite{ClaudonToridefequiv}}]\label{thm-multsec}
Let $G$ be a finite group and $f : X \to B$ a $G$-equivariant smooth torus fibration. Assume that the total space $X$ is a compact K\"ahler manifold. Then in the $G$-equivariant tautological family $\Pi$ associated to $f$, the subset of $V^G \cnec H^1(B ,\cE_{\bH/B})^G$ parameterizing fibrations with a multi-section is dense in $V^G$. In particular, if the fibers of $f$ and $B$ are projective, then $\Pi$ is an algebraic approximation of $f$.
\end{thm}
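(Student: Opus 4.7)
The plan is to reduce the density assertion to a statement about torsion classes in $H^1(B, \cJ_{\bH/B})$ and then exploit the exponential exact sequence~\eqref{SE-Jac}. By the Claudon criterion cited above, a $J$-torsor $\cX_t \to B$ admits a multi-section if and only if its class in $H^1(B, \cJ_{\bH/B})$ is torsion; since the fiber of $\Pi$ over $t \in V^G$ represents $\eta(f) + \exp(t)$, it suffices to prove that
$$\bigl\{\, t \in V^G : \eta(f) + \exp(t) \in H^1(B, \cJ_{\bH/B})_{\mathrm{tors}} \,\bigr\}$$
is dense in $V^G$.

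The exact sequence~\eqref{SE-Jac} produces a connecting morphism $\delta : H^1(B, \cJ_{\bH/B}) \to H^2(B, \bH)$. After tensoring with $\bQ$, the torsion condition on $\eta(f) + \exp(t)$ reads $\exp_\bQ(t) = -\eta(f)_\bQ$ in $H^1(B, \cJ_{\bH/B}) \otimes \bQ$, and admits solutions in $t$ if and only if $\delta(\eta(f))$ is torsion. The first key step is to verify this torsion property using the compact K\"ahler hypothesis on $X$: for smooth $f$ with $X$ K\"ahler, Deligne--Blanchard degeneration of the Leray spectral sequence identifies $\delta(\eta(f))$ with a Hodge-theoretic class in $H^{2g+1}(X,\bQ)$ whose vanishing in $H^2(B, \bH \otimes \bR)$ is forced by compatibility with Hodge structures. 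Granting this, averaging over $G$ (using that $\eta(f)$ is $G$-invariant) yields a $G$-invariant preimage $t_0 \in V^G$ with $\exp_\bQ(t_0) = -\eta(f)_\bQ$, and the density reduces to showing that the coset $t_0 + (\mathrm{image}(H^1(B, \bH) \to H^1(B, \cE_{\bH/B})) \otimes \bQ)$ intersects $V^G$ in a dense subset.

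The heart of the proof is the surjectivity
$$H^1(B, \bH \otimes \bR) \longrightarrow H^1(B, \cE_{\bH/B})$$
as a map of real vector spaces; equivalently, that $H^1(B, \bH)$ embeds as a full lattice in $H^1(B, \cE_{\bH/B})$, so that $H^1(B, \cJ_{\bH/B})^0$ is a compact complex torus. I would establish this via Deligne's Hodge theory applied to the polarizable variation of Hodge structure $\bH = R^{2g-1}f_*\bZ$ of weight $2g-1$ on the smooth projective curve $B$ (with polarization coming from the restriction of a K\"ahler class on $X$ to the fibers): since $\bH$ is of Hodge level one with types $(g, g-1)$ and $(g-1, g)$, the $C^\infty$ bundle map $\bH \otimes \bR \to \cE_{\bH/B}$ is pointwise an $\bR$-linear isomorphism via the Hodge decomposition on each fiber, and transferring this to $H^1(B, -)$ through Dolbeault/harmonic theory yields the desired $\bR$-surjectivity. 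The $G$-equivariant version follows by averaging over $G$, whose action preserves every structure in sight.

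Combining these three ingredients, the set of torsion-producing parameters is dense in $V^G$, proving the first assertion. The second assertion is then immediate: if the fibers of $f$ and the base $B$ are algebraic, then any $\cX_t$ admitting a multi-section is projective by Corollary~\ref{cor-multsecMoibase} (Campana's criterion), and hence $\Pi$ is an algebraic approximation of $f$. I expect the main technical obstacle to lie in the two Hodge-theoretic inputs: the torsion-ness of $\delta(\eta(f))$ coming from the compact K\"ahler hypothesis on $X$, and the $\bR$-surjectivity of $H^1(B, \bH \otimes \bR) \to H^1(B, \cE_{\bH/B})$ coming from Deligne's theorem for polarizable variations of Hodge structure on a curve, each of which requires care to set up but is classical in nature.
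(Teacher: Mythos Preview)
Your proposal is correct and follows essentially the same three-step strategy as the paper's sketch: (1) the K\"ahler hypothesis forces $\delta(\eta(f))$ to be torsion, yielding a $G$-invariant lift $t_0 \in V^G$ of a multiple of $\eta(f)$; (2) the surjectivity of $H^1(B,\bH)\otimes\bR \to V$ gives density of the rational image in $V^G$; (3) Campana's criterion finishes. Two minor differences in execution are worth noting. First, for the $G$-invariant lift the paper invokes the refined class $\eta_G(f) \in H^1_G(B,\cJ)$ in equivariant cohomology (citing~\cite[Proposition 2.11]{ClaudonToridefequiv}) rather than averaging; your averaging works but requires replacing $m$ by $|G|\cdot m$, since the averaged lift may differ from $m\eta(f)$ by a $|G|$-torsion element. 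Second, for the $\bR$-surjectivity the paper argues directly on the global object: Deligne's theorem makes $H = H^1(B,\bH)$ a pure Hodge structure of weight $2g$ concentrated in bidegrees $(g\pm1,g\mp1)$ and $(g,g)$, and identifies $V \simeq H_\bC/F^gH_\bC$, so surjectivity of $H_\bR \to V$ is immediate from Hodge symmetry. Your fiberwise-plus-harmonic-theory route reaches the same conclusion but is less direct than simply quoting the global Hodge structure.
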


\begin{proof}[Sketch of proof of Theorem~\ref{thm-multsec}]

By Deligne's theorem, $H \colonec H^1(B, \bH)$ is a pure Hodge structure of degree $2g$ (where $g$ is the relative dimension of $f$) satisfying the Hodge symmetry and concentrated in bi-degrees $(g-1,g+1)$, $(g,g)$, and $(g+1,g-1)$~\cite[Section 2]{ZuckerHdgL2}. Also, if $F^\bullet H_\bC$ denotes the Hodge filtration of $H$, then $V \cnec H^1(B ,\cE_{\bH/B})$ is isomorphic to $H_\bC/F^gH_\bC$~\cite[Section 2]{ZuckerHdgL2}. It follows that the composition  
$$\mu : H_\bR \hto H_\bC  \to V.$$
is surjective, so $\mu(H_{\bQ})$ is dense in $V$. Since $G$ is finite, we have 
$$\mu(H_{\bQ}^G) \otimes \bR = \mu(H_{\bQ})^G \otimes \bR = V^G.$$
Therefore $\mu(H_{\bQ}^G) $ is dense in $V^G$.

The K\"ahler assumption of $X$ implies that the image of the $G$-equivariant class $\eta_G(f) \in H^1_G(B,\cJ)$ associated to $X$ (which is a refinement of $\eta(f)$, see~\cite[Section 2.4]{ClaudonToridefequiv}) under the connecting morphism
$$H^1_G(B,\cJ) \to H^2_G(B,\bH)$$
induced by~\eqref{SE-Jac} is torsion~\cite[Proposition 2.11]{ClaudonToridefequiv}. So there exist $m \in \bZ_{>0}$ and $t_0 \in V^G$ such that $m\eta(f) = \exp(t_0)$. Therefore $\eta(f) + \exp\(t - \frac{t_0}{m}\)$ is torsion for every $t \in \mu(H_{\bQ}^G)$, so each of the fibrations $\cX_t \to B$ parameterized by the subset 
$$\mu(H_{\bQ}^G) - \frac{v_0}{m} \subset V^G$$
in the tautological family~\eqref{fam-Jtors} has an étale multi-section over $B$~\cite[Proposition 2.2]{ClaudonToridefequiv}. 
As $\mu(H_{\bQ}^G)$ is dense in $V^G$, so is $\mu(H_{\bQ}^G) - \frac{v_0}{m}$. 
Hence $V^G$ contains a dense subset parameterizing fibrations in the tautological family having a multi-section over $B$. The last statement follows from the main statement of Theorem~\ref{thm-multsec} and Corollary~\ref{cor-multsecMoibase}.
\end{proof}%

\section{Fibrations in abelian varieties over a curve}\label{sec-genjac}

In this section, we study deformations of (\emph{a priori} non-smooth) fibrations in abelian varieties over a curve in a similar spirit of what we did in Section~\ref{sec-fiblisse}.  In \S\ref{ssec-thmZucker} and \S\ref{ssec-genjac}, we first focus on the Hodge-theoretic ingredients we need in the proof of Theorem~\ref{thm-AbFibDefprec} based on Zucker's theory of variations of Hodge structures over a curve~\cite{ZuckerHdgL2} and also~\cite{ElZeinZuck}. Then we introduce and study in \S\ref{ssec-bimJtors} the notion of \emph{bimeromorphic $J$-torsors} and construct the tautological families associated to them.

\ssec{Variation of Hodge structures over a curve: Zucker's theorem}\label{ssec-thmZucker}
\hfill

First we recall Zucker's result on the variations of Hodge structures (VHS) over a curve~\cite{ZuckerHdgL2}.  Let $B^\st$ be a smooth quasi-projective curve and $\bH$ an (integral) local system which underlies a VHS of weight $m$ over $B^\st$.  Let $j : B^\st \hto {B}$ be the smooth compactification of $B^\st$. 

\begin{thm}[Zucker~{\cite[Theorem 7.12]{ZuckerHdgL2}}]\label{thm-ZuckSH}
Assume that $\bH$ underlies an $\bR$-polarized VHS of weight $m$ whose local monodromies 
around $B \bss B^\st$ are quasi-unipotent. Then $H^i({B} , j_* \bH)$ has a natural Hodge structure of weight $m+i$. Moreover, if $\bH$ satisfies the Hodge symmetry, then $H^i({B} , j_* \bH)$  satisfies the Hodge symmetry as well.
\end{thm}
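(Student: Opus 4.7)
The plan is to realize $H^i(B, j_*\bH)$ as an $L^2$-cohomology group with respect to carefully chosen metrics, and then transport the Hodge decomposition from $L^2$-harmonic theory onto it. Equip $B^\st$ with a complete Kähler metric of finite volume, asymptotic to the Poincaré metric $\frac{|dz|^2}{|z|^2 \log^2 |z|^2}$ near each puncture of $B \setminus B^\st$, and equip $\bH \otimes \cO_{B^\st}$ with the Hodge metric coming from the $\bR$-polarization. These two data give a complex of sheaves $\cA^\bullet_{(2)}(\bH)$ on $B$ whose sections over an open $U \subset B$ are smooth $\bH$-valued forms on $U \cap B^\st$ that are $L^2$ together with their differential; its hypercohomology is the $L^2$-cohomology $H^\bullet_{(2)}(B^\st, \bH)$.

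First I would prove the comparison $H^i_{(2)}(B^\st, \bH) \simeq H^i(B, j_*\bH)$. Since $\cA^\bullet_{(2)}(\bH)$ is a complex of fine sheaves, it suffices to show it is a resolution of $j_*\bH$, which is a local statement on a small punctured disk $\Delta^* \subset B^\st$ around each boundary point. Using the quasi-unipotent hypothesis, one passes to a finite cover to reduce to the unipotent case and applies Schmid's nilpotent orbit theorem to control the Hodge norms of flat sections: a flat section is $L^2$ for the Poincaré metric if and only if it is monodromy-invariant, which is precisely what cuts out the stalk of $j_*\bH$. A similar but more delicate argument in degree one shows that every $L^2$-closed $1$-form on $\Delta^*$ is $L^2$-exact, so that $\cA^\bullet_{(2)}(\bH)$ is indeed quasi-isomorphic to $j_*\bH$.

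Next I would invoke Hodge theory on the complete Kähler manifold $(B^\st, \text{Poincaré})$. By Andreotti--Vesentini/Gaffney, on a complete Kähler manifold the Laplacian on $L^2$-forms admits a self-adjoint extension for which the strong Hodge decomposition holds; in particular each class in $H^i_{(2)}(B^\st, \bH)$ has a unique $L^2$-harmonic representative. The Hodge metric on $\bH$ induces a bigrading on $\cA^\bullet_{(2)}(\bH) \otimes \bC$, and the completeness of the Poincaré metric (together with the fact that $\bH$ underlies a polarized VHS, so that the usual Kähler identities of Deligne extend to this setting) forces the harmonic forms to decompose as
\[
\cH^i_{(2)} = \bigoplus_{p+q = m+i} \cH^{p,q},
\]
giving a pure Hodge structure of weight $m+i$ on $H^i(B, j_*\bH)$ via the comparison above. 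The integral structure comes from $j_*\bH \subset j_*(\bH \otimes \bR)$, and Hodge symmetry of $\bH$ transfers to that of $\cH^{p,q}$ because complex conjugation on fibers commutes with the Laplacian and preserves the $L^2$-condition.

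The main obstacle is the comparison step at the punctures: establishing simultaneously that monodromy-invariant flat sections are $L^2$ (so that $(j_*\bH)_x$ injects into $L^2$-cohomology) and that no spurious non-invariant classes contribute. This requires fine asymptotic estimates on the Hodge norm governed by Schmid's $\mathrm{SL}_2$-orbit theorem, and it is where the quasi-unipotency hypothesis on the local monodromies is genuinely used. Once this local $L^2$-Poincaré lemma is in place, the global statement and the Hodge symmetry assertion are formal consequences of standard harmonic theory on a complete Kähler manifold, parallel to Deligne's proof in the smooth projective case.
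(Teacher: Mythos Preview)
The paper does not give a proof of this theorem at all: it is quoted verbatim from Zucker~\cite[Theorem 7.12]{ZuckerHdgL2} and used as a black box, so there is no ``paper's own proof'' to compare against. Your outline is in fact a faithful summary of Zucker's original argument --- identify $H^i(B,j_*\bH)$ with $L^2$-cohomology for the Poincar\'e-type metric on $B^\st$ and the Hodge metric on $\bH$, use Schmid's norm estimates (via quasi-unipotency) to prove the local $L^2$-Poincar\'e lemma at the punctures, and then run harmonic theory on the complete K\"ahler manifold to get the Hodge decomposition --- so as a sketch it is correct, though of course each step (especially the local vanishing in degree one and the K\"ahler identities with VHS coefficients) is substantial work in Zucker's paper rather than routine.
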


Here an \emph{$\bR$-polarized} VHS is a VHS admitting a flat bilinear pairing defined over $\bR$ on the Hodge bundle $\bH_\bC = \bH \otimes \bC$  whose restriction to each fiber of $\bH_\bC$ is a polarization of the underlying Hodge structure. This is the case when $\bH = R^mf^\st_*\bZ$ where $f^\st : X^\st \to B^\st$ is a family of compact K\"ahler manifolds. The quasi-unipotence of the monodromy action is satisfied for instance, when $\bH = R^mf^\st_*\bZ$ where $f^\st : X^\st \to B^\st$ is the smooth part of a locally projective morphism $f : X \to B$~\cite[Theorem 3.15]{VoisinII}.

Let  
$\cH \colonec \bH \otimes \cO_{B^\st}$
 and $\cF^\bullet \cH$ be the Hodge filtration on $\cH$. Let 
$$\nabla : \cH \to \cH \otimes \gO^1_{B^\st}$$ 
denote the Gauss-Manin connection. Assume that the local monodromies of $\bH$ around $ B \bss B^\st$ are quasi-unipotent, then $\cH$ has a canonical extension $\bar{\cH}$ due to Deligne together with a filtration $\cF^\bullet \bar{\cH}$ and a morphism 
$$\bar{\nabla} : \bar{\cH} \to \bar{\cH}  \otimes \gO^1_{{B}}(\log{\gS})$$ 
extending $\cF^\bullet \cH$ and $\nabla$ (see \eg~\cite[p.128, p.129]{ZuckerDHB}).
The connection $\bar{\nabla}$ satisfies Griffiths' transversality
$$\bar{\nabla}\(\cF^p \bar{\cH}\) \subset \cF^{p-1} \bar{\cH} \otimes  \gO^1_B(\log{\gS})$$
so that  $\bar{\nabla}$ induces a map 
$$\bar{\nabla}_p : \bar{\cH} / \cF^p \bar{\cH} \to  \(\bar{\cH} / \cF^{p-1} \bar{\cH}\) \otimes  \gO^1_B(\log{\gS}).$$
for each $p$. We define
$$\( \bar{\cH} / \cF^p \bar{\cH}\)_h \colonec \ker (\bar{\nabla}_p )$$
to be the \emph{horizontal part} of $ \bar{\cH} / \cF^p \bar{\cH}$.

The inclusion $\bH \hto \cH$ can be extended to $j_*\bH \hto \bar{\cH}$ over $B$.
Since the local sections of $\bH$ are flat with respect to the Gauss-Manin connection and $\bar{\cH} / \cF^p \bar{\cH}$ is locally free~\cite[p.130]{ZuckerDHB}, the image of the composition $j_*\bH \hto \bar{\cH} \to \bar{\cH} / \cF^p \bar{\cH}$ lies in $\(\bar{\cH} / \cF^p \bar{\cH}\)_h$.

We will only be interested in the case where $i=1$ in Theorem~\ref{thm-ZuckSH}. The following result can be found in the proof of~\cite[Theorem 9.2]{ZuckerHdgL2} as a simple corollary of~\cite[Proposition 9.1]{ZuckerHdgL2}.
\begin{pro}[Zucker]\label{pro-Zuck}
Let $\bH$ be as in Theorem~\ref{thm-ZuckSH} and let  $F^\bullet H$ denote the Hodge filtration on 
$$H \colonec H^1({B} , j_* \bH) \otimes \bC.$$ Then for every integer $p$, the map  $j_*\bH \to \(\bar{\cH} / \cF^p \bar{\cH}\)_h$ induces an isomorphism 
$$H / F^p H \simeq H^1({B}, \(\bar{\cH} / \cF^p \bar{\cH}\)_h).$$ 
\end{pro}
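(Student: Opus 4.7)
The strategy is to realize $H/F^pH$ as the hypercohomology of an explicit two-term complex on $B$ and then to use \cite[Proposition~9.1]{ZuckerHdgL2} to collapse that complex onto its horizontal kernel.

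\textbf{Hypercohomological description of $H/F^pH$.} By Deligne's theory of the canonical extension---valid here because the local monodromies are quasi-unipotent---the inclusion $j_*\bH \hookrightarrow \bar{\cH}$ extends to a quasi-isomorphism
$$
j_*\bH \otimes \bC \simeq \bigl[\bar{\cH} \xto{\bar{\nabla}} \bar{\cH}\otimes \gO^1_B(\log\gS)\bigr]
$$
on $B$, the right-hand side being the logarithmic de Rham complex (concentrated in degrees $0$ and $1$). Under this identification, Theorem~\ref{thm-ZuckSH} asserts that the Hodge filtration on $H$ is the one induced by the naive Hodge filtration on the log de Rham complex, so that
$$
F^p H = \mathbb{H}^1\!\bigl(B,\bigl[\cF^p\bar{\cH} \xto{\bar{\nabla}} \cF^{p-1}\bar{\cH}\otimes \gO^1_B(\log\gS)\bigr]\bigr).
$$
Passing to the quotient complex, $H/F^pH$ is the degree-$1$ hypercohomology
$$
H/F^pH = \mathbb{H}^1(B, \cK_p^\bullet), \qquad \cK_p^\bullet \cnec \bigl[\bar{\cH}/\cF^p\bar{\cH} \xto{\bar{\nabla}_p} (\bar{\cH}/\cF^{p-1}\bar{\cH})\otimes \gO^1_B(\log\gS)\bigr].
$$

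\textbf{Collapsing to the horizontal kernel.} According to \cite[Proposition~9.1]{ZuckerHdgL2}, the connection $\bar{\nabla}_p$ is surjective as a morphism of sheaves on $B$. Granting this, $\cK_p^\bullet$ is quasi-isomorphic to its kernel $(\bar{\cH}/\cF^p\bar{\cH})_h = \ker \bar{\nabla}_p$ placed in degree $0$. Taking hypercohomology then gives
$$
H/F^pH \simeq \mathbb{H}^1(B, \cK_p^\bullet) \simeq H^1\bigl(B, (\bar{\cH}/\cF^p\bar{\cH})_h\bigr),
$$
which is the desired isomorphism.

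The main obstacle is the surjectivity of $\bar{\nabla}_p$ invoked in the second step. Over the open curve $B^\st$ this is a direct consequence of Griffiths transversality, but at the points of $\gS$ one must control the cokernel of $\bar{\nabla}_p$ using the residue of the Gauss--Manin connection on the canonical extension together with the polarization of the VHS; this is precisely the content of \cite[Proposition~9.1]{ZuckerHdgL2}, which I will simply quote as a black box.
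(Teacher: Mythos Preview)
Your argument is correct and is exactly the route the paper points to: the paper does not give its own proof but simply records that the statement ``can be found in the proof of \cite[Theorem 9.2]{ZuckerHdgL2} as a simple corollary of \cite[Proposition 9.1]{ZuckerHdgL2}''. Your write-up spells out precisely that deduction---identify $H$ with the hypercohomology of the logarithmic de Rham complex, use the degeneration in Zucker's theorem to identify $H/F^pH$ with $\mathbb{H}^1(B,\cK_p^\bullet)$, and then invoke \cite[Proposition 9.1]{ZuckerHdgL2} to collapse $\cK_p^\bullet$ onto $(\bar{\cH}/\cF^p\bar{\cH})_h$---so there is nothing to compare. One cosmetic remark: the surjectivity of $\bar\nabla_p$ over $B^\st$ is not really ``Griffiths transversality'' (that is what makes $\bar\nabla_p$ well-defined); it follows instead from the local surjectivity of $\nabla$ on $\cH$ via a flat frame, and the nontrivial content at the points of $\gS$ is indeed \cite[Proposition 9.1]{ZuckerHdgL2}, as you say.
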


What will be useful for our purpose is the following corollary.

\begin{cor}\label{Cor-Zuck}
Let $\bH$ be as in Theorem~\ref{thm-ZuckSH}. Assume that $\bH$ underlies a VHS of weight $2g-1$  concentrated in bi-degrees $(g,g-1)$ and $(g-1,g)$ and satisfying the Hodge symmetry. Then the image of  
$$H^1({B} , j_* \bH \otimes \bQ) \to H^1({B}, \bar{\cH} / \cF^g \bar{\cH})$$
induced by $ j_* \bH \to  \bar{\cH} / \cF^g \bar{\cH}$ is dense in $H^1({B}, \bar{\cH} / \cF^g \bar{\cH})$.
\end{cor}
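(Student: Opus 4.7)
The strategy is to reduce Corollary~\ref{Cor-Zuck} to Proposition~\ref{pro-Zuck} by exploiting the specific Hodge type of $\bH$ and then pass from $\bQ$-coefficients to the quotient $\bar{\cH}/\cF^g\bar{\cH}$ by combining Hodge symmetry with density of $H_\bQ$ in $H_\bR$.

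First, I would observe that since $\bH$ is concentrated in bi-degrees $(g,g-1)$ and $(g-1,g)$, we have $\cF^{g-1}\cH = \cH$, and hence $\cF^{g-1}\bar{\cH} = \bar{\cH}$ (the canonical extension preserves this). Consequently $\bar{\cH}/\cF^{g-1}\bar{\cH} = 0$, which forces the map
\[
\bar{\nabla}_g : \bar{\cH}/\cF^g\bar{\cH} \to \(\bar{\cH}/\cF^{g-1}\bar{\cH}\) \otimes \gO^1_B(\log \gS)
\]
to be identically zero. Therefore the horizontal part equals the whole quotient: $\(\bar{\cH}/\cF^g\bar{\cH}\)_h = \bar{\cH}/\cF^g\bar{\cH}$. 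Applying Proposition~\ref{pro-Zuck} with $p=g$ then produces an isomorphism
\[
H/F^gH \simeq H^1(B, \bar{\cH}/\cF^g\bar{\cH})
\]
induced by the natural map $j_*\bH \to \bar{\cH}/\cF^g\bar{\cH}$, where $H \colonec H^1(B, j_*\bH) \otimes \bC$.

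Next, by Theorem~\ref{thm-ZuckSH} the vector space $H$ carries a Hodge structure of weight $2g$ satisfying the Hodge symmetry. I would use this to check that the composition $H_\bR \hookrightarrow H \to H/F^gH$ is surjective. Indeed, given any class $\xi \in H/F^gH$, lift it to $\tilde{\xi} \in H_\bC$ whose Hodge components $\tilde{\xi}^{p,2g-p}$ vanish for $p \geq g$. By Hodge symmetry, $\overline{\tilde{\xi}}$ has only Hodge components $(q,2g-q)$ with $q > g$, so $\overline{\tilde{\xi}} \in F^{g+1}H \subseteq F^gH$. Then $\tilde{\xi} + \overline{\tilde{\xi}} \in H_\bR$ projects to $\xi$ in $H/F^gH$, establishing surjectivity.

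Finally, since $H^1(B, j_*\bH \otimes \bQ) \otimes \bR = H_\bR$ is dense in $H_\bC$ along each real subspace we need, and since a continuous surjection sends a dense subset to a dense subset, the image of $H^1(B, j_*\bH\otimes\bQ)$ in $H/F^gH \simeq H^1(B, \bar{\cH}/\cF^g\bar{\cH})$ is dense. There is no real obstacle here: the only subtle point is the observation that the bi-degree hypothesis kills $\bar{\nabla}_g$, allowing Proposition~\ref{pro-Zuck} to be applied without having to cut down to the horizontal subsheaf.
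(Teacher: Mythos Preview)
Your proof is correct and follows essentially the same route as the paper: kill $\bar{\nabla}_g$ using the bi-degree hypothesis, invoke Proposition~\ref{pro-Zuck} to identify $H^1(B,\bar{\cH}/\cF^g\bar{\cH})$ with $H/F^gH$, then use Hodge symmetry of $H$ to get surjectivity of $H_\bR \to H/F^gH$ and conclude by density of $H_\bQ$ in $H_\bR$. The only cosmetic difference is that the paper cites the concentration of $H$ in bi-degrees $(g+1,g-1)$, $(g,g)$, $(g-1,g+1)$ (from Zucker's theorem) to justify surjectivity, whereas your conjugation argument uses only Hodge symmetry of $H$ and its weight being $2g$; both are valid.
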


\begin{proof}
Since $\bH$ is concentrated in bi-degrees $(g,g-1)$ and $(g-1,g)$, we have $ \cF^{g-1} \bar{\cH} = \bar{\cH}$, so $\bar{\nabla}_g = 0$ and
$$\( \bar{\cH} / \cF^g \bar{\cH}\)_h = \bar{\cH} / \cF^g \bar{\cH}.$$
Thus by Proposition~\ref{pro-Zuck}, we have $ H/F^gH \simeq  H^1({B}, \bar{\cH} / \cF^g \bar{\cH})$.
Since the VHS which overlies $\bH$ satisfies the Hodge symmetry and is concentrated in bi-degrees $(g,g-1)$ and $(g-1,g)$, the Hodge structure $H = H^1({B} , j_* \bH)$ satisfies also Hodge symmetry and is concentrated in bi-degrees $(g+1,g-1)$, $(g,g)$, and $(g-1,g+1)$ by~\cite[Theorem 7.12]{ZuckerHdgL2}. Thus the natural map $H^1({B} , j_* \bH) \otimes \bR \to H/F^gH \simeq  H^1({B}, \bar{\cH} / \cF^g \bar{\cH})$ is surjective. Corollary~\ref{Cor-Zuck} now follows from the density of $H^1({B} , j_* \bH \otimes \bQ) = H^1({B} , j_* \bH) \otimes \bQ$  in $H^1({B} , j_* \bH) \otimes \bR$.
\end{proof}%

\ssec{The canonical extension of $\cJ$}\label{ssec-genjac}
\hfill

 The variations of Hodge structures that we will encounter in the proof of Theorem~\ref{thm-AbFibDefprec} will satisfy the assumption of Corollary~\ref{Cor-Zuck}. In order to simplify the notation, we set $\cE \cnec \cE_{\bH/B} \colonec \cH / \cF^g\cH$, and similarly $\bar{\cE} \cnec \bar{\cE}_{\bH/B} \colonec \bar{\cH} / \cF^g\bar{\cH}$. For these VHSs, recall that the sheaf of sections of the intermediate Jacobian fibration $J$ associated to $\bH$ is defined by $\cJ \cnec \cJ_{\bH/B} \cnec \cE/ \bH$, and the canonical extension of $\cJ$ is defined by the exact sequence~\cite[(2.5)]{ZuckerGenJac}
\begin{equation}\label{SE-genJac}
\begin{tikzcd}[cramped, row sep = 2.5]
0  \arrow[r] & j_*\bH  \arrow[r] &  \bar{\cE} \arrow[r,"\exp"] &  \bar{\cJ} \cnec \bar{\cJ}_{\bH/B} \arrow[r] & 0. 
\end{tikzcd}
\end{equation}

For the VHSs coming from a family of varieties, there is another way to define $\bar{\cJ}$ due to Deligne, El Zein, and Zucker and we recall the construction following~\cite{ElZeinZuck}. Let $f : X \to B$ be a morphism of compact K\"ahler manifolds with $\dim B = 1$. Assume that the fibers of $f$ are normal crossing divisors. Let $j : B^\st \hookrightarrow B$ be a Zariski open subset parameterizing smooth fibers of $f$ and set $ \imath : X^\st \colonec f^{-1}(B^\st) \hto X$. Let $f^\st : X^\st \to B^\st$ denote the restriction of $f$ to  $X^\st$. We assume that the local monodromies of $R^{2g-1}f^\st_*\bZ$ around $\gS = B \bss B^\st$ are quasi-unipotent.

Let $D \colonec f^{-1}(\gS)$. For $\bH \colonec R^{2g-1}f_*^\st \bZ$, we have 
$$\bar{\cH} \simeq R^{2g-1}f_* \gO^\bullet_{X/B}(\log{D})$$
and
$$\cF^p \bar{\cH} \simeq R^{2g-1}f_* F^p\gO^\bullet_{X/B}(\log{D}),$$
where $F^\bullet\gO^\bullet_{X/B}(\log{D})$ is the naïve filtration on $\gO^\bullet_{X/B}(\log{D})$ (\cf\cite[p.130]{ZuckerDHB}). 
Let 
$$\gs_p \gO^\bullet_{X/B}(\log{D}) \colonec \gO^\bullet_{X/B}(\log{D}) /F^p\gO^\bullet_{X/B}(\log{D}).$$
In the bounded derived category of sheaves of abelian groups over $X$, we define the morphism 
$$\Phi^p : R\imath_*\bZ \to  R\imath_*\bC \simeq \gO_X^\bullet (\log{D}) \to  \gO^\bullet_{X/B}(\log{D}) \to \gs_p \gO^\bullet_{X/B}(\log{D})$$
and define the relative Deligne complex to be 
$$\ul{D}_{X/B}(p) \colonec \Cone(\Phi^p)[-1].$$ 
Applying $Rf_*$ to $\ul{D}_{X/B}(p)$ yields the long exact sequence
\begin{equation}\label{LSE-DeligneJac}
\begin{tikzcd}[cramped, row sep = 2.5]
\cdots  \arrow[r] & R^{q-1}f_*\gs_p \gO^\bullet_{X/B}(\log{D})  \arrow[r] & R^qf_*\ul{D}_{X/B}(p) \arrow[r] & R^q(f \circ \imath)_* \bZ \arrow[r] & \cdots 
\end{tikzcd}
\end{equation}
The canonical extension of ${\cJ}$ is defined to be
$$\bar{\cJ} \colonec \coker \(R^{2g-1}(f \circ \imath)_* \bZ \to R^{2g-1}f_*\gs_g \gO^\bullet_{X/B}(\log{D}) \).$$

Note that since the spectral sequence 
$$E_1^{p,q} = R^q f_* \gO^p_{X/B}(\log{D}) \Rightarrow R^{p+q}f_* \gO^\bullet_{X/B}(\log{D})$$
degenerates at $E_1$ (see \cite[p.130]{ZuckerDHB}), we have  $R^{2g-1}f_*\gs_g \gO^\bullet_{X/B}(\log{D}) \simeq \bar{\cE}$. So for $p = g$, breaking up the long exact sequence~\eqref{LSE-DeligneJac} at $R^{2g-1}f_*\gs_g \gO^\bullet_{X/B}(\log{D})$ yields
\begin{equation}
\begin{tikzcd}[cramped, row sep = 2.5]
0  \arrow[r] & \Ima\(R^{2g-1}(f \circ \imath)_* \bZ \xto{\psi} \bar{\cE}\)  \arrow[r] &  \bar{\cE} \arrow[r] &  \bar{\cJ}  \arrow[r] & 0. 
\end{tikzcd}
\end{equation}
In order to compare the above construction of $\bar{\cJ}$ with the one defined by~\eqref{SE-genJac}, it suffices to prove the following isomorphism. 

\begin{lem}\label{lem-iso2contr}
We have the isomorphism
\begin{equation}\label{iom-defcanextjac}
j_*\bH \simeq \Ima\(\psi : R^{2g-1}(f \circ \imath)_* \bZ \to \bar{\cE}\).
\end{equation}
\end{lem}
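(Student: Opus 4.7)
The plan is to exhibit a factorization
$$\psi : R^{2g-1}(f \circ \imath)_*\bZ \twoheadrightarrow j_*\bH \hookrightarrow \bar{\cE},$$
the second arrow being the canonical inclusion of $j_*\bH$ into $\bar{\cE}$ already used to write down~\eqref{SE-genJac}, and then to observe that this factorization coincides with the map $\psi$ produced by the long exact sequence~\eqref{LSE-DeligneJac}.

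First I would produce the surjection $R^{2g-1}(f \circ \imath)_*\bZ \twoheadrightarrow j_*\bH$. Write $f \circ \imath = j \circ f^\st$ and look at the Leray spectral sequence
$$E_2^{p,q} = R^p j_* R^q f^\st_*\bZ \Longrightarrow R^{p+q}(j \circ f^\st)_*\bZ.$$
Since $B$ is a smooth curve and $\gS = B \setminus B^\st$ is finite, for any local system $\cL$ on $B^\st$ the sheaf $R^p j_* \cL$ vanishes for $p \geq 2$, so the spectral sequence collapses into short exact sequences. In degree $2g-1$ this gives
$$0 \to R^1 j_* R^{2g-2}f^\st_*\bZ \to R^{2g-1}(f \circ \imath)_*\bZ \to j_* R^{2g-1}f^\st_*\bZ \to 0.$$
The kernel is supported on $\gS$, and the torsion subsheaf of $j_* R^{2g-1}f^\st_*\bZ$ is likewise supported on $\gS$. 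Because $\bar{\cE}$ is locally free, hence torsion-free with no sections supported at points, both of these skyscraper contributions are killed by $\psi$, so $\psi$ factors through $j_*\bH = j_*(R^{2g-1}f^\st_*\bZ / \torsion)$.

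Next I would check that the induced map $j_*\bH \to \bar{\cE}$ is injective and agrees with the canonical inclusion of Chapter~XIV, (13). For injectivity, on $B^\st$ the composition $\bH \hookrightarrow \cH \to \cH/\cF^g\cH = \cE$ is fiberwise injective because for a Hodge structure of weight $2g-1$ one has $F^g H_b \cap \overline{F^g H_b} = 0$, hence $F^g H_b \cap H_{b,\bZ} = 0$. Applying the left-exact functor $j_*$ yields $j_*\bH \hookrightarrow j_*\cE$. Since $\bar{\cE}$ is locally free, the adjunction $\bar{\cE} \to j_*j^*\bar{\cE} = j_*\cE$ is injective; the two morphisms $j_*\bH \to \bar{\cE} \to j_*\cE$ and $j_*\bH \to j_*\cE$ agree by the very definition of the canonical extension (the former uses $j_*\bH \hookrightarrow \bar{\cH} \to \bar{\cE}$, which restricts to $\bH \to \cH \to \cE$ on $B^\st$), so $j_*\bH \to \bar{\cE}$ is injective and coincides with the inclusion of~\eqref{SE-genJac}.

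The remaining task, and the only real point, is to verify that the map to $\bar{\cE}$ produced by the Deligne complex construction really is the composition above. Tracing through the definition of $\Phi^g$, the arrow $R^{2g-1}(f\circ\imath)_*\bZ \to \bar{\cE}$ is gotten by tensoring with $\bC$, using Deligne's quasi-isomorphism $R\imath_*\bC \simeq \gO^\bullet_X(\log D)$ and the canonical map $\gO^\bullet_X(\log D) \to \gO^\bullet_{X/B}(\log D) \to \gs_g \gO^\bullet_{X/B}(\log D)$, then applying $R^{2g-1}f_*$ and identifying the target with $\bar{\cE}$. Over $B^\st$, where there are no log poles, this manifestly restricts to the composition $R^{2g-1}f^\st_*\bZ \otimes \bC \to R^{2g-1}f^\st_*\cO_{X^\st} = \cH \otimes_{\bR} \bC / \cdots \to \cE$ underlying the Leray edge map, and it extends uniquely across $\gS$ because $\bar{\cE}$ is locally free (so $\bar{\cE} \hookrightarrow j_* j^*\bar{\cE}$). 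Combining this identification with the two steps above yields $\Ima(\psi) = j_*\bH$, which is~\eqref{iom-defcanextjac}.

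The main obstacle is this last compatibility: reconciling the Deligne complex description of $\psi$ with the Leray edge map composed with the inclusion $j_*\bH \hookrightarrow \bar{\cE}$. Once this is settled, both the factorization and the injectivity are essentially formal consequences of $\bar{\cE}$ being locally free and of $\dim B = 1$.
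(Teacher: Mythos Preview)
Your argument is correct and rests on the same two ingredients as the paper's proof: the identification of $\psi|_{B^\st}$ with the inclusion $\bH \hookrightarrow \cE$, and the local freeness of $\bar{\cE}$. The paper packages these slightly differently. Rather than invoking the Leray spectral sequence for $j \circ f^\st$ to exhibit the kernel of $\phi : R^{2g-1}(f\circ\imath)_*\bZ \to j_*\bH$ as a skyscraper, it simply cites the surjectivity of $\phi$ and then checks directly that $\ker(\phi) = \ker(\psi)$ over a small disc $\Delta$ around a point of $\gS$: since $\bar{\cE}$ is locally free, a section of $\bar{\cE}$ over $\Delta$ vanishes iff its restriction to a general fiber $\bar{\cE}_s \simeq H^{2g-1}(X_s,\bC)/F^g$ vanishes, and this fiber restriction is visibly the same as the one computing $\ker(\phi)$. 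Your version trades this pointwise kernel comparison for a cleaner global statement (skyscraper kernels die in a torsion-free sheaf, and the induced map $j_*\bH \to \bar{\cE}$ is pinned down on $B^\st$ and hence everywhere by $\bar{\cE} \hookrightarrow j_*\cE$); the paper's version avoids the spectral sequence bookkeeping. Either way the ``main obstacle'' you flag---that the Deligne-complex map agrees on $B^\st$ with the standard inclusion---is exactly what the paper takes as its starting observation, and both proofs then reduce everything to that plus local freeness.
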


Since a proof of Lemma~\ref{lem-iso2contr} is hard to find in the literature, we provide one below.

\begin{proof}
As the restriction of $\psi$ to $B^\st$ is the inclusion $\bH \hto \cE$, it suffices to prove~\eqref{iom-defcanextjac} around a neighborhood $p \in B \bss B^\st$. Let $\Delta \subset B$ a sufficiently small disc centered at $p$. Let $\gD^\st \colonec \Delta - \{p\}$ and $X^\st_\Delta \colonec f^{-1}(\Delta^\st)$. Since the natural morphism  $\phi : R^{2g-1}(f \circ \imath)_* \bZ \to j_*\bH$ is surjective (\emph{cf.} the proof of~\cite[Proposition 7]{ElZeinZuck}),  it suffices to show that 
\begin{equation}\label{ker=}
\ker(\phi) = \ker(\psi)
\end{equation} 
over  $\Delta$. 
Let $\ga \in H^{2g-1}(X_\gD^\st, \bZ)$. On the one hand,
$$\ga \in \ker\(\phi(\Delta) : H^{2g-1}(X_\gD^\st, \bZ) \to H^0(\gD^\st, R^{2g-1}f_*\bZ) \)$$
if and only if its restriction to a general fiber $X_s$ of $X^\st_\Delta \to \Delta^\st$, which is an element  in 
$$H^{2g-1}(X_s,\bZ) \subset H^{2g-1}(X_s, \bC)/F^gH^{2g-1}(X_s, \bC),$$
is zero. On the other hand, since $\bar{\cE}$ is locally free~\cite[p. 130]{ElZeinZuck}, 
$$\ga \in \ker\(\psi(\Delta) : H^{2g-1}(X_\gD^\st, \bZ) \to H^0(\Delta, \bar{\cE} )  \) $$
if and only if the restriction of $\psi(\Delta)(\ga)$ to a general fiber $\bar{\cE}_s \simeq H^{2g-1}(X_s, \bC)/F^gH^{2g-1}(X_s, \bC)$ of  $\bar{\cE}$ is zero. Since the two restrictions coincide, we have~\eqref{ker=}.
\end{proof}

We can also break up~\eqref{LSE-DeligneJac} at $R^{2g}f_*\ul{D}_{X/B}(g)$, which yields the short exact sequence
\begin{equation}\label{SE-DeligneJac}
\begin{tikzcd}[cramped, row sep = 2.5]
0  \arrow[r] & \bar{\cJ}  \arrow[r] & R^{2g}f_*\ul{D}_{X/B}(g) \arrow[r] & H^{g,g}(X/B) \arrow[r] & 0 
\end{tikzcd}
\end{equation}
where
$$H^{g,g}(X/B) \colonec \ker \( R^{2g}(f \circ \imath)_* \bZ \to R^{2g}f_*\gs_g \gO^\bullet_{X/B}(\log{D}) \).$$

\begin{lem}\label{lem-comm}
The diagram
\begin{equation}\label{diag-chernleray}
\begin{tikzcd}[cramped]
 H^0\(B, R^{2g}({f} \circ \imath)_*\bZ\)   \arrow[rr,"d_2"] & & H^2\(B, R^{2g-1}(f \circ \imath)_*\bZ\) \arrow[d, two heads]  \\
H^0(B, H^{g,g}({X}/B)) \arrow[u]   \arrow[r, "\delta"] & H^1(B, \bar{\cJ})   \arrow[r,"c"] & H^2(B,j_*\bH)  
\end{tikzcd}
\end{equation}
is commutative, where the first row is the $d_2$ map in the second page of the Leray spectral sequence
$$E_2^{p,q} = H^p\(B, R^{q}({f} \circ \imath)_*\bZ\)  \Rightarrow H^{p+q}(X^\st,\bZ)$$
and the morphisms in the second row are the connecting morphisms induced by~\eqref{SE-DeligneJac} and~\eqref{SE-genJac} respectively.
\end{lem}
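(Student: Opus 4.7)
The plan is to interpret every arrow in~\eqref{diag-chernleray} as a connecting homomorphism attached to one and the same distinguished triangle on $B$, namely
$$Rf_*\ul{D}_{X/B}(g) \to R(f \circ \imath)_*\bZ \to Rf_*\gs_g \gO^\bullet_{X/B}(\log{D}) \xto{+1},$$
obtained by applying $Rf_*$ to the defining triangle of the relative Deligne complex. Both short exact sequences~\eqref{SE-DeligneJac} and~\eqref{SE-genJac} are, after identifying $j_*\bH$ with the image of $\psi$ via Lemma~\ref{lem-iso2contr}, extracted from the long exact sequence of sheaves~\eqref{LSE-DeligneJac}, while the $d_2$-differential of the Leray spectral sequence is the connecting homomorphism of the canonical truncation triangle for $R(f \circ \imath)_*\bZ$. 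With this setup, the assertion reduces to the statement that $d_2(\ga)$, projected to $H^2(B, j_*\bH)$ via the surjection coming from Lemma~\ref{lem-iso2contr}, coincides with the doubly-iterated connecting homomorphism $c \circ \delta(\ga)$ for every $\ga \in H^0(B, H^{g,g}(X/B)) \hto H^0(B, R^{2g}(f\circ\imath)_*\bZ)$.

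To verify this I would perform a \v{C}ech computation on a sufficiently acyclic open cover $\{U_i\}$ of $B$. On the one hand, $\delta(\ga)$ is represented by the $1$-cocycle $\{\wt{\ga}_i - \wt{\ga}_j\}$ where $\wt{\ga}_i \in \Gamma(U_i, R^{2g}f_*\ul{D}_{X/B}(g))$ is any local lift of $\ga|_{U_i}$, and $c(\delta(\ga))$ is then obtained by further lifting these differences from $\bar{\cJ}$ to $\bar{\cE}$ and extracting the resulting $j_*\bH$-valued $2$-cocycle. On the other hand, $d_2(\ga)$ is represented by the $2$-cocycle produced from local integral lifts of $\ga|_{U_i}$ to $H^{2g}((f \circ \imath)^{-1}(U_i), \bZ)$. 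Unwinding the mapping-cone definition $\ul{D}_{X/B}(g) = \Cone(\Phi^g)[-1]$ shows that specifying a local lift $\wt{\ga}_i$ amounts to specifying a local integral lift of $\ga$ together with a compatible lift modulo $\cF^g$; the integral component of $\wt{\ga}_i$ is thus exactly the datum used to compute $d_2$, and Lemma~\ref{lem-iso2contr} then matches the projection of the $d_2$-cocycle to $j_*\bH$ with the $\bar{\cE}$-lift used to compute $c$.

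At a more structural level the same commutativity should follow from the octahedral axiom applied to the factorisation $H^{g,g}(X/B) \hto R^{2g}(f\circ\imath)_*\bZ \to R^{2g}f_*\gs_g \gO^\bullet_{X/B}(\log{D})$, which would yield a commutative diagram of long exact hypercohomology sequences whose two vertical boundaries are precisely $\delta$ and the truncation boundary underlying $d_2$. The main obstacle is the bookkeeping needed to align these two spectral-sequence constructions: $d_2$ is naturally extracted from iterated truncations of $R(f\circ\imath)_*\bZ$, whereas $c \circ \delta$ comes from two successive short exact sequences of sheaves, and matching them precisely at the \v{C}ech level requires an explicit functorial resolution (e.g.\ Godement) of $\ul{D}_{X/B}(g)$ rather than an abstract derived-category representative. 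Once such a resolution is fixed, the identification should follow formally from the compatibility of the canonical truncation filtration with the cone construction.
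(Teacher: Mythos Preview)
Your plan is correct and matches the paper's approach at the conceptual level: both reduce to the distinguished triangle
\[
Rf_*\ul{D}_{X/B}(g) \to R(f\circ\imath)_*\bZ \to Rf_*\gs_g\gO^\bullet_{X/B}(\log D) \xto{+1}
\]
and use Lemma~\ref{lem-iso2contr} to identify $j_*\bH$ with $\coker\bigl(R^{2g-1}f_*\ul D_{X/B}(g) \to R^{2g-1}(f\circ\imath)_*\bZ\bigr)$. The difference lies in how the ``bookkeeping obstacle'' you flag is handled. Rather than carrying out an ad hoc \v{C}ech computation or an octahedral argument, the paper isolates the required compatibility as a general homological-algebra statement (the appendix Lemma~\ref{lem-appenComm}): for any short exact sequence of bounded complexes $0 \to L^\bullet \to M^\bullet \to N^\bullet \to 0$ and any left exact functor $F$, the $d_2$ differential of the hypercohomology spectral sequence $E_2^{p,q} = R^pF(H^q(M^\bullet)) \Rightarrow R^{p+q}F(M^\bullet)$ is compatible with the composite of the two connecting morphisms coming from the short exact sequences $0 \to \coker(\phi) \to H^{q-1}(N^\bullet) \to \Ima(\rho) \to 0$ and $0 \to \Ima(\rho) \to H^q(L^\bullet) \to \ker(\psi) \to 0$. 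The proof of that lemma uses Cartan--Eilenberg resolutions of~\eqref{diag-LMNexact} (rather than \v{C}ech or Godement), which is precisely the ``explicit functorial resolution'' you were looking for; applied with $F = \Gamma(B,-)$ and the complexes above, the two extracted short exact sequences are exactly~\eqref{SE-genJac} and~\eqref{SE-DeligneJac}, so Lemma~\ref{lem-comm} becomes a one-line application. Your \v{C}ech sketch would ultimately reproduce the same argument, but the paper's abstraction cleanly separates the homological algebra from the geometry.
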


\begin{proof}
It suffices to apply Lemma~\ref{lem-appenComm} (\cf Appendix)
to the short exact sequence of bounded complexes representing the distinguished triangle
\begin{equation}
\begin{tikzcd}[cramped, row sep = 2.5]
Rf_*\ul{D}_{X/B}(g) \arrow[r] & Rf_*R\imath_*\bZ \arrow[r] & Rf_*\gs_g \gO^\bullet_{X/B}(\log{D}) \arrow[r] & Rf_*\ul{D}_{X/B}(g)[1] 
\end{tikzcd}
\end{equation}
and note that  $j_*\bH \simeq \Ima\(R^{2g-1}(f \circ \imath)_* \bZ \to \bar{\cE}\)$ by Lemma~\ref{lem-iso2contr}.
\end{proof}%

When $g$ is the relative dimension of the fibration $f: X \to B$, we have the following result. 
\begin{lem}\label{lem-isoFibg}
Let $g = \dim X - \dim B$. We have
$$H^{g,g}(X/B) = R^{2g}(f \circ \imath)_* \bZ.$$
In particular, the morphism $H^0\(B, H^{g,g}(X/B)\) \to H^0\(B, R^{2g}({f} \circ \imath)_*\bZ\)$ in~\eqref{diag-chernleray} is the identity.
\end{lem}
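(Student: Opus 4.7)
The plan is to show directly that $R^{2g}f_*\gs_g \gO^\bullet_{X/B}(\log D) = 0$, whereupon the definition of $H^{g,g}(X/B)$ as the kernel of $R^{2g}(f \circ \imath)_*\bZ \to R^{2g}f_*\gs_g\gO^\bullet_{X/B}(\log D)$ forces $H^{g,g}(X/B) = R^{2g}(f \circ \imath)_*\bZ$.

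First, since the relative dimension of $f$ is $g$, the relative logarithmic de Rham complex $\gO^\bullet_{X/B}(\log D)$ is concentrated in degrees $0, 1, \ldots, g$. The na\"ive filtration thus gives $F^g \gO^\bullet_{X/B}(\log D) = \gO^g_{X/B}(\log D)[-g]$, so the quotient
$$\gs_g \gO^\bullet_{X/B}(\log D) = \gO^\bullet_{X/B}(\log D)/F^g\gO^\bullet_{X/B}(\log D)$$
is the subcomplex concentrated in degrees $0, 1, \ldots, g-1$, with $p$-th term $\gO^p_{X/B}(\log D)$.

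Next, I invoke the $E_1$-degeneration of the Hodge-to-de Rham spectral sequence already used in the paragraph just before this lemma (see~\cite[p.~130]{Griffiths}), which gives
$$R^{2g}f_*\gs_g \gO^\bullet_{X/B}(\log D) \simeq \bigoplus_{\substack{p+q = 2g \\ 0 \leq p \leq g-1}} R^q f_* \gO^p_{X/B}(\log D).$$
For each summand we have $q = 2g - p \geq g+1$. Since $f$ is proper of relative dimension $g$ (the map is automatically flat because $B$ is a smooth curve and $X$ is reduced), the standard vanishing for higher direct images gives $R^q f_*\gO^p_{X/B}(\log D) = 0$ for $q > g$. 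Hence every summand is zero and $R^{2g}f_*\gs_g\gO^\bullet_{X/B}(\log D) = 0$.

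Consequently the defining map of $H^{g,g}(X/B)$ lands in the zero sheaf, so $H^{g,g}(X/B) = R^{2g}(f \circ \imath)_*\bZ$. The second assertion then follows immediately, since under this identification the arrow $H^0(B, H^{g,g}(X/B)) \to H^0(B, R^{2g}(f\circ \imath)_*\bZ)$ appearing in the diagram~\eqref{diag-chernleray} is induced by the identity of $R^{2g}(f \circ \imath)_*\bZ$. I do not anticipate any real obstacle: the entire argument reduces to the degree count $p + q = 2g$ together with the bound $p \leq g-1$, forcing $q \geq g+1$ and hence vanishing by relative dimension.
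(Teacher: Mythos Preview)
Your argument is correct. Both the paper and you reduce the statement to showing $R^{2g}f_*\gs_g\gO^\bullet_{X/B}(\log D)=0$, but the routes diverge from there. The paper argues Hodge-theoretically: it identifies this sheaf with $\bar{\cV}/F^g\bar{\cV}$ where $\bar{\cV}$ is the canonical extension of the VHS $\cV = R^{2g}f^\st_*\bZ \otimes \cO_{B^\st}$, observes that this VHS is pure of type $(g,g)$ so that $\cV/F^g\cV=0$ over $B^\st$, and then concludes from the local freeness of $\bar{\cV}/F^g\bar{\cV}$. You instead use the spectral sequence together with the relative-dimension vanishing $R^qf_*\cF=0$ for $q>g$ on each coherent piece $\gO^p_{X/B}(\log D)$. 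Your route is more elementary in that it bypasses the canonical-extension and local-freeness machinery; the paper's route, on the other hand, makes the Hodge-theoretic reason for the vanishing transparent (the top cohomology of a $g$-dimensional fiber is of pure type $(g,g)$). One small remark: $E_1$-degeneration strictly gives a filtration with those graded pieces rather than a direct sum, but this is irrelevant for concluding vanishing---indeed you do not even need degeneration, since if every $E_1^{p,q}$ with $p+q=2g$ vanishes then so does the abutment.
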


\begin{proof}
By definition of $H^{g,g}({X}/B)$, it suffices to show that the sheaf $R^{2g}f_*\gs_g \gO^\bullet_{X/B}(\log{D})$ is zero.
Let $\bV \colonec R^{2g}f_*^\st \bZ$. Since $f:X \to B$ is of relative dimension $g$, $\cV$ is a pure VHS concentrated in bi-degree $(g,g)$.  The sheaf $\bar{\cV} \colonec R^{2g}f_* \gO^\bullet_{X/B}(\log{D})$ is the canonical extension of $\cV \colonec\bV \otimes \cO_{B^\st}$, so $R^{2g}f_*\gs_g \gO^\bullet_{X/B}(\log{D}) = \bar{\cV}/F^g\bar{\cV}$ is locally free~\cite[p.130]{ElZeinZuck}. It follow from ${\cV}/F^g\cV = 0$ that $\bar{\cV}/F^g\bar{\cV} = 0$.
\end{proof}%

\ssec{Bimeromorphic $J$-torsors and their tautological families}\label{ssec-bimJtors}
\hfill

Bimeromorphic $J$-torsors are defined as follows.

\begin{Def}\label{def-bimJtors}
Let $\phi : J \to B^\st$ be a Jacobian fibration over a smooth quasi-projective curve $B^\st$ and $B$ the smooth compactification of $B^\st$. 
Assume that $\phi$ is projective. 
A \emph{bimeromorphic $J$-torsor} is a 
proper morphism $f : X \to B$ satisfying the following properties:
\begin{enumerate}[label = \roman{enumi})]
\item\label{def-bimJtors-openJtors} The restriction $f^\st : X^\st \to B^\st$ of $f$ to $X^\st  \cnec f^{-1}(B^\st)$ is a $J$-torsor.
\item The total space $X$ is locally bimeromorphically K\"ahler over $B$. Namely, for every $b \in B$, there exists a neighborhood $U \subset B$ of $b$ such that $f^{-1}(U)$ is bimeromorphic to a K\"ahler manifold.
\item The fibration $f$ has local sections at every point of $B$. Namely, for every $b \in B$, there exists a neighborhood $U \subset B$ of $b$ such that $f$ has a local section $\gs_U : U \to f^{-1}(U)$ over $U$.
\end{enumerate}
\end{Def}%

The following lemma allows us to apply results obtained in \S\ref{ssec-thmZucker} and \S\ref{ssec-genjac} to study bimeromorphic $J$-torsors. An (integral) local system $\bH$ over a Zariski open $B^\st$ of a smooth curve $B$ is called \emph{locally geometric} if for every $b \in B$, there exists a neighborhood $U \subset B$ of $b$ and a projective morphism $f_U : X_U \to U$ such that $\bH_{|U \cap B^\st} \simeq \(R^i{f_U}_*\bZ\)_{|U \cap B^\st}$ for some $i \in \bZ$.

\begin{lem}\label{lem-locgeom}
	 If $f : X \to B$ is a bimeromorphic $J$-torsor, then the VHS which overlies $\bH \cnec R^{2g-1}f_*^\st\bZ$ is locally geometric where $g = \dim X - \dim B$. In particular, the local monodromies of $\bH$ around $B \bss B^\st$ are quasi-unipotent.
\end{lem}

\begin{proof}
On the one hand, since $f$ is locally bimeromorphically K\"ahler and fibers of $f$ are algebraic, by~\cite[Corollaire du Théorème 2]{Campana-redalg}  $f$ is locally Moishezon. On the other hand, there exists by assumption some modification $\ti{f} : \ti{X} \to B$ of $f$ along singular fibers such that for every $b \in B$, there exists a neighborhood $U \subset B$ of $b$ such that $\ti{f}^{-1}(U)$ is K\"ahler. So $\ti{f}$ is locally projective by~\cite[Theorem 10.1]{CampanaPeternell2-form} and since $R^{2g-1}\ti{f}_*\bZ_{|B^\st} = R^{2g-1}f_*\bZ_{|B^\st} = \bH$, it follows that  $\bH$ is locally  geometric.
\end{proof}

Let $f : X \to B$ be a bimeromorphic $J$-torsor. When $B \bss B^\st \ne \emptyset$, we let $\{p_1,\ldots,p_m\} \cnec B \bss B^\st$. By assumption, there exists a good open cover $\{U_i\}_{i=1,\ldots,n}$ of $B$ such that $p_i \in U_j$ if and only if $i=j$ and that $f_i : X_i \colonec f^{-1}(U_i) \to U_i$ has a section  $\gs_i : U_i\to X_i$ for all $i$. For each $i > m$ (resp. $i\le m$), let $U^\st_i = U_i$ (resp. $U^\st_i = U_i - \{p_i\}$). By Definition~\ref{def-bimJtors} i) and iii), 
for each $i$ there exists a biholomorphic map
\begin{equation}\label{eqn-etai}
\eta_i : X^\st_i = f^{-1}(U^\st_i) \to \phi^{-1}(U^\st_i) = J_i
\end{equation}
 over $U^\st_i$ sending $\gs_i (U^\st_i)$ to the zero-section of $J_i \to U^\st_i$,
 such that over $U_{ij} \cnec U_i \cap U_j = U^\st_i \cap U^\st_j$ (for $i \ne j$),
 $$\eta_j \circ \eta_i^{-1} = \tr(\eta_{ij}) : J_{ij} \to J_{ij} \cnec p^{-1}(U_{ij})$$
 for some 1-cocycle $\{\eta_{ij}\}$ with coefficients in $\cJ$ 
 (hence in $\bar{\cJ}$) defining the $J$-torsor structure of $f^\st : X^\st \to B^\st$.
  Whenever $i \ne j$, we have $U_i \cap U_j = U^\st_i \cap U^\st_j$, so we can glue the fibrations $X_i \to U_i$ along $X_{ij} \cnec f^{-1}(U_{ij})$ using the transition maps 
$$\eta_{j}^{-1} \circ \eta_{i} : X_{ij} \to X_{ij}.$$
We call the thus obtained fibration $p : \bar{J} \to B$ the \emph{Jacobian fibration associated to $f :X\to B$}, which is a compactification of  $J \to B^\st$. By construction, the zero-section of $J \to B^\st$ extends to a section of $\bar{J} \to B$.
Note also that by construction, $\eta_i$ in~\eqref{eqn-etai} extends to a biholomorphic map
\begin{equation}\label{eqn-etai'}
\eta_i : X_i \eto p^{-1}(U_i) =: \bar{J}_i
\end{equation}
over $U_i$.

\begin{lem}\label{lem-Jbarhol}
The bimeromorphic class of the Jacobian fibration $p : \bar{J} \to B$ constructed above is independent of the bimeromorphic $J$-torsor $f$. Also, the sheaf $\bar{\cJ}$ is contained in the sheaf of germs of holomorphic sections of $p : \bar{J} \to B$.
\end{lem}

\begin{rem}
	 The sheaf of germs of holomorphic sections of $\bar{J} \to B$ is a sheaf of abelian groups. Indeed, since $f$ is a fibration over a smooth curve, 
	 every local meromorphic section is holomorphic 
	 (because every meromorphic map from a smooth curve is holomorphic). 
	 For every open subset $U \subset B$, meromorphic sections of $p : \bar{J} \to B$ over $U$ form an abelian group coming from the group variety structure of $J \to B^\st$.
	
\end{rem}

\begin{proof}
By Lemma~\ref{lem-locgeom}, the VHS which overlies $\bH$ is locally geometric and is therefore admissible~\cite[Theorem 14.51]{PetersSteenbMHS}. Accordingly, the Jacobian fibration $J \to B^\st$ admits a Saito compactification $\bar{J}_S \to B$~\cite[Theorem 0.8]{MSaitoAdmnorm} and there is a bimeromorphic map $\nu : \bar{J} \dto \bar{J}_S$ over $B$~\cite[Example 2.10]{MSaitoAdmnorm}. This proves the first statement of Lemma~\ref{lem-Jbarhol}.

As $\bar{J}_S \to B$ is a compactification of Zucker's extension $\bar{J}_Z \to B$~\cite[p.237--238]{MSaitoAdmnorm} constructed in~\cite[Section 2]{ZuckerGenJac} and since $\bar{\cJ}$ is contained in the sheaf of germs of holomorphic sections of  $\bar{J}_Z \to B$~\cite[Section 2]{ZuckerGenJac},  we conclude that $\bar{\cJ}$ is identified (\emph{via} $\nu$) with a subsheaf of sheaf of germs of holomorphic sections of  $p : \bar{J} \to B$.
\end{proof}

Let $\cE(B,\gD,\bH)$ be the set of bimeromorphic classes (over $B$) of bimeromorphic $J$-torsors. Assume that $\cE(B,\gD,\bH) \ne \emptyset$, then we can construct a map 
$$\Phi : H^1(B,\bar{\cJ}) \to \cE(B,\gD,\bH)$$
as follows. First we fix a Jacobian fibration $p : \bar{J} \to B$ associated to some element in $\cE(B,\gD,\bH)$ and a good open cover $\{U_i\}$ of $B$ such that $U_{ij} \cnec U_i \cap U_j \subset B^\st$ for every $i$ and $j$. Let $\eta \in H^1(B,\bar{\cJ})$, represented by a 1-cocycle $\{\eta_{ij}\}$ defined over $U_{ij}$. 
Since $U_{ij} \subset B^\st$, the sections $\eta_{ij}$ define a 1-cocycle of translations
$$\tr(\eta_{ij}) : p^{-1}(U_{ij}) \eto p^{-1}(U_{ij})$$
over $U_{ij}$ and can be used to glue the $ p^{-1}(U_i) \to U_i$ and form a bimeromorphic $J$-torsor $f : X \to B$. 
We call $f$ the \emph{bimeromorphic $J$-torsor twisted by $\{\eta_{ij}\}$}.
Suppose that $\{\eta'_{ij}\}$ is another 1-cocycle representing $\eta$ and let $f' : X' \to B$ be the bimeromorphic $J$-torsor twisted by $\{\eta'_{ij}\}$. Let $\gt_i \in \bar{\cJ}(U_i)$ be local sections such that ${\gt_i}_{|U_{ij}} - {\gt_j}_{|U_{ij}} = \eta_{ij} - \eta'_{ij}$. As $\gt_i$ is a holomorphic section of $p: \bar{J} \to B$ (Lemma~\ref{lem-Jbarhol}), the translation map $\tr(\gt_i)$, which is biholomorphic over $U_i \cap B^\st$, extends to a bimeromorphic map $\tr(\gt_i) : p^{-1}(U_i) \dto p^{-1}(U_i)$ over $U_i$~\cite[Proposition 1.6]{NakayamaWeierCnUniv}. 
These bimeromorphic maps glue together and
define a bimeromorphic map $X \dto X'$ over $B$, which shows that $f$ and $f'$ have the same image in $\cE(B,\gD,\bH)$. Thus $\Phi$ is well-defined. Finally, it is easy to see that  the construction of $\Phi$ does not depend on the choice of the Jacobian fibrations $\bar{J} \to B$, because these fibrations have the same bimeromorphic class (Lemma~\ref{lem-Jbarhol}).
We also note that if $\eta, \eta' \in H^1(B,\bar{\cJ})$ satisfy $\Phi(\eta) = \Phi(\eta')$,
then $\Phi(\eta + \gt) = \Phi(\eta' + \gt)$ for any $\gt \in H^1(B,\bar{\cJ})$.

\begin{lem}\label{lem-surjPhi}
	The map $\Phi : H^1(B,\bar{\cJ}) \to \cE(B,\gD,\bH)$ is surjective.	
\end{lem}

\begin{proof}
	Let $f : X \to B$ be a bimeromorphic $J$-torsor and let
	$p: \bar{J} \to B$ be a Jacobian fibration associated to $f$.
	For every $i$,
	let $\eta_i : X_i \eto \bar{J}_i$
	be the biholomorphic map~\eqref{eqn-etai'}.
	Then $\eta_j \circ \eta_i^{-1} = \tr(\eta_{ij}) : p^{-1}(U_{ij}) \to  p^{-1}(U_{ij})$
	over $U_{ij}$ for some 1-cycle $\eta = \{ \eta_{ij}\}$ with coefficients in $\bar{\cJ}$.
	By construction, we have $\Phi(\eta) = [f]$.
\end{proof}

Given an integer $m \in \bZ$ and a (smooth) $J$-torsor,
we can define its multiplication-by-$m$, which is another $J$-torsor
(see \eg~\cite[p.482]{ClaudonToridefequiv}).
The construction of multiplication-by-$m$ can be extended to
bimeromorphic $J$-torsors, which we explain now.

Let $f : X \to B$ be a bimeromorphic $J$-torsor.
As before, let $\{U_i\}_{i \in I}$ be a good open cover of $B$ 
such that $U_{ij} \cnec U_i \cap U_j \subset B^\st$ for every $i$ and $j$.
We start by choosing a
1-cycle of local multi-sections $\gs_i$ over each $U_i$ of degree $m$,
namely a $1$-cycle
$$\gs_i = \sum_{\ell \in J_i} c_{i,\ell} \cdot Z_{i,\ell} \in Z_1(X_i)$$
in each $X_i \cnec f^{-1}(U_i)$, such that each $Z_{i,\ell}$ is a 
multi-section over $U_i$ and $c_{i,\ell} \in \bZ$ 
satisfying 
$$\deg(\gs_i) \cnec \sum_{\ell \in J_i} c_{i,\ell} \cdot d_{i,\ell} = m,$$
where $d_{i,\ell}$ is the degree of the finite map $Z_{i,\ell} \xto{f} U_i$.
Such a choice exists because $f : X\to B$ has local sections everywhere over $B$ by assumption.
 
Let $U_{i}^\circ \subset U_{i}$ be a nonempty Zariski open subset 
such that all the multi-sections $Z_{i,\ell}$ are \'etale over $U^\circ_{i}$.
For every $t \in U_{i}^\circ$, define 
$$\left\{x_{i,\ell,1}(t),\ldots x_{i,\ell,d_{i,\ell}}(t) \right\} \cnec Z_{i,\ell} \cap X_t$$ 
where $X_t \cnec f^{-1}(t)$. 
Given $i , j \in I$, since 
$$\deg(\gs_i) - \deg(\gs_j) = \sum_{\ell \in J_i} c_{i,\ell} \cdot d_{i,\ell} -\sum_{\ell \in J_j} c_{j,\ell} \cdot d_{j,\ell} = m - m  = 0,$$
we can define the
Albanese image
$$\gs_{ij}(t) \cnec \gs_{i}(t) -  \gs_{j}(t) \cnec 
\( \sum_{\ell \in J_i} c_{i,\ell} \cdot \sum_{k = 1}^{d_{i,\ell}} x_{i,\ell,k}(t) \)
- \(\sum_{\ell \in J_j} c_{j,\ell} \cdot \sum_{k = 1}^{d_{j,\ell}} x_{j,\ell,k}(t)\)
\in \Alb(X_t) =  J_t \cnec p^{-1}(t)$$
for every $t\in  U^\circ_{ij} \cnec U_{i}^\circ \cap U_j^\circ$.
Each $\gs_{ij}$ defines a local section of $p : \bar{J} \to B$ over $U^\circ_{ij}$,
which extends to a local section over $U_{ij}$.
By construction, the 1-cochain $\gs \cnec \{\gs_{ij}\}$ with coefficients in $\bar{\cJ}$ 
is a 1-cocycle.
Let $f^\gs : X^\gs \to B$ be the bimeromorphic $J$-torsor 
twisted by $\gs$.

\begin{lem-def}\label{lemdef-multm}
The bimeromorphic class $\Phi(\gs) = [f^\gs] \in \cE(B,\gD,\bH)$
is independent of $\gs$, as long as
each local 1-cycle of multi-sections $\gs_i$ 
defining $\gs$ satisfies $\deg(\gs_i) = m$.
Moreover, if $\eta \in H^1(B,\bar{\cJ})$ satisfies
$\Phi(\eta) = [f]$, then $\Phi(m\cdot \eta) = [f^\gs]$.

A bimeromorphic $J$-torsor $f_m : X_m \to B$ 
representing $[f^\gs] \in \cE(B,\gD,\bH)$ is called a \emph{multiplication-by-$m$} of $f : X \to B$,
which coincides with the usual definition 
(see \eg~\cite[p.482]{ClaudonToridefequiv}) when $f$ is smooth.
\end{lem-def}

\begin{proof}
	Let
	$$\gs'_i = \sum_{\ell \in J'_i} c'_{i,\ell} \cdot Z'_{i,\ell} \in Z_1(X_i)$$
	be another collection of 1-cycles of multi-sections 
	over $U_i$ such that $\deg(\gs_i') = m$ for each $i$.
	For every $i$ and $t \in U_i^\circ$, consider the Albanese image
	$$\gt_i(t) \cnec  \gs_{i}(t) -  \gs'_{i}(t) \cnec 
	\( \sum_{\ell \in J_i} c_{i,\ell} \cdot \sum_{k = 1}^{d_{i,\ell}} x_{i,\ell,k}(t) \)
	- \(\sum_{\ell \in J'_i} c'_{i,\ell} \cdot \sum_{k = 1}^{d'_{i,\ell}} x'_{i,\ell,k}(t)\)
	\in \Alb(X_t) =  J_t $$ 
	where $d'_{i,\ell}$ and $x'_{i,\ell,k}(t)$ are defined for $\gs'$
	the same way we define $d_{i,\ell}$ and $x_{i,\ell,k}(t)$ for $\gs$.
	
	Each $\gt_i$ defines a local section of 
	$p : \bar{J} \to B$ over $U^\circ_{i}$,
	which extends to a local section over $U_{i}$,
	so the translation by $\gt_i$ defines a 
	bimeromorphic map $\tr(\gt_i) : \bar{J}_i \dto \bar{J}_i \cnec p^{-1}(U_i)$
	by~\cite[Proposition 1.6]{NakayamaWeierCnUniv}.
	By construction, we have the commutative diagram of isomorphisms
	\begin{equation}
	\begin{tikzcd}
	\bar{J}_{ij} \cnec p^{-1}(U_{ij}) \ar[d, "\tr(\gt_i)"'] \ar[r, "\tr(\gs_{ij})"] & \bar{J}_{ij} \ar[d, "\tr(\gt_j)"] \\
	\bar{J}_{ij}  \ar[r, "\tr(\gs'_{ij})"] & \bar{J}_{ij}, 
	\end{tikzcd}
	\end{equation}
	so we can glue the bimeromorphic maps $\tr(\gt_i) : \bar{J}_i \dto \bar{J}_i$ along $\bar{J}_{ij}$
	through $\tr(\gs_{ij})$ and $\tr(\gs'_{ij})$, and form a bimeromorphic map
	$X^\gs \dto X^{\gs'}$ over $B$.
	
	Now we prove the second statement. By assumption,
	there exist a 1-cocycle $\{\eta_{ij}\}$ representing 
	$\eta \in H^1(B,\bar{\cJ})$
	and bimeromorphic maps 
	$\eta_i :  X_i \dto p^{-1}(U_i) =: \bar{J}_i$ 
	over $U_i$
	such that  $\eta_i$ is an isomorphism over $U_i \cap B^\st$ and
	$\eta_i \circ \eta_j^{-1} = \tr(\eta_{ij})$ over $U_{ij}$.
	Let $Z_i \subset X_i$ be the image of the 0-section of 
	$\bar{J}_i \to U_i$ under $\eta_i$. 
	By choosing
	$\gs_i \cnec m \cdot Z_i$ in the definition of $\gs$,
	we have $\gs = m \cdot \eta$, 
	which proves that $\Phi(m \cdot \eta) = [f^\gs]$.
\end{proof}

%

Let $m \in \bZ_{>0}$ and let $f_m : X_m \to B$ be a multiplication-by-$m$ of $f : X \to B$. 
There exists a finite étale morphism $X^\st \to X_m^\st$ called \emph{multiplication-by-$m$}~\cite[p.482]{ClaudonToridefequiv} defined on the smooth part of $f$ and by~\cite[Proposition 1.6]{NakayamaWeierCnUniv}, this map has a meromorphic extension $\bm : X \dto X_m$ over $B$. As a consequence, we obtain the following result.

\begin{lem}\label{lem-torsmulsc}
	If $\eta \in H^1(B,\bar{\cJ})$ is a torsion element, then the bimeromorphic $J$-torsors represented by $\Phi(\eta)$ have  multi-sections.
\end{lem}

\begin{proof}
	
	Let $f:X \to B$ be a bimeromorphic $J$-torsor represented by $\Phi(\eta)$. 
	If $m \eta = 0$ for some $m \in \bZ_{>0}$, 
	then since $\Phi(0) = [p : \bar{J} \to B]$,
	 we have a multiplication-by-$m$ map $\bm : X \dto \bar{J}$ over $B$. As $\bm$ is generically finite,  
	the pre-image of a global section of $\bar{J} \to B$ under $\bm$ is a  multi-section of $X \to B$. 
\end{proof}

The following lemma 
could be considered as a version of~\cite[Remark 2.3]{ClaudonToridefequiv}
for bimeromorphic $J$-torsors.

\begin{lem}\label{lem-etadef}
Let $f : X \to B$ be a bimeromorphic $J$-torsor. 
Assume that $X$ is smooth and
the singular fibers of $f$ are normal crossing divisors.
Let $\imath : X^\st  \hto X$ be the inclusion.
Let $\gb \in H^0(B, R^{2g}f_* \bZ)$
and let $n \in \bZ$ denote the image of the restriction $H^0(B, R^{2g}f_* \bZ) 
\to H^0(B^\st, R^{2g}f^\st _* \bZ) \simeq \bZ$. 
Then there exists $d \in \bZ_{>0}$ such that
if $\eta \in H^1(B,\bar{\cJ})$ denotes the image of $d \cdot \gb$ under the composition
\begin{equation}\label{mor-etadef}
\begin{tikzcd}[cramped, row sep = 4]
H^0(B, R^{2g}f_* \bZ) \ar[r] & H^0(B, R^{2g}(f \circ \imath)_* \bZ) \simeq H^0(B, H^{g,g}(X/B)) \arrow[r, "\gd"] & H^1(B,\bar{\cJ})   
\end{tikzcd}
\end{equation}
induced by Lemma~\ref{lem-isoFibg} and the short exact sequence~\eqref{SE-DeligneJac}, then $\Phi(\eta) = [f_{dn}] \in \cE(B,\gD,\bH)$.
\end{lem}

%
%

\begin{proof}

First we prove that there exists $d \in \bZ_{>0}$ such that locally, 
$d \cdot \gb$ is some linear combination of classes of multi-sections. 

\begin{lem}\label{lem-Zjcj}
	There exists $d \in \bZ_{>0}$ such that for every $t \in B$, 
	there exist a neighborhood $t \in U \subset B$ of $t$ 
	and a finite number of (holomorphic) multi-sections 
	$Z_j \subset X_U \cnec f^{-1}(U)$ of $X_U \xto{f} U$ 
	together with some integers $c_j$
	such that 
	$$\sum_{j} c_j [Z_j] = d\cdot \gb_{|U} \in H^0(U, R^{2g}f_* \bZ).$$ 
\end{lem}

\begin{proof}
	Let $\gS \subset B$ be the finite subset parameterizing singular fibers of $f$.
	Given $t \in \gS$, 
	let $t \in U \subset B$ be a contractible neighborhood of $t$
	such that $f$ is smooth over $U \bss \{t\}$.
	Then
	$$H^0(U, R^{2g}f_* \bZ) = H^{2g}(f^{-1}(U), \bZ) \simeq H^{2g}(f^{-1}(t), \bZ) 
	\simeq \bigoplus_{F_j \in J} \bZ $$
	where  $J$ is the set of irreducible components of 
	$X_t \cnec f^{-1}(t)$. 
	Through this isomorphism, 
	$\gb_{|U} \in H^0(U, R^{2g}f_* \bZ)$ 
	is sent to the intersection numbers $m_j \cnec \gb_{|U} \cdot [F_j]$ with $F_j$ running through $J$.
	Since $f$ is locally projective by assumption,
	up to shrinking $U$ we can also assume that $f_U \cnec f_{|X_U} $ is projective.
	So for every irreducible component $F_j$, 
	there exists a multi-section $Z_j \subset X_U$ of $f_U : X_U \to U$
	which only intersects with $F_j$ (at $d_j > 0$ points, counted with multiplicity)
	and disjoint with other components $F_\ell$ of $X_t$.
	Let $d_t \cnec \lcm\{d_j \mid F_j \in J\}$ and let $d \cnec \lcm\{d_t \mid t\in \gS\}$. 
	Then 
	$$d \cdot \gb_{|U} = \sum_{j \in J} c_j [Z_j] 
	\in H^0(U, R^{2g}f_* \bZ)$$ 
	with $c_j = m_jd/d_j$, which are integers.
	
	Finally if $t \in B \bss \gS$, then $f$ is smooth over a 
	contractible neighborhood $U \subset B$ of $t$ and we have 
	$H^0(U, R^{2g}f_* \bZ) \simeq H^{2g}(f^{-1}(t), \bZ)$. 
	Up to shrinking $U$, we can assume that $f_U : X_U \to U$ has a section $Z \subset X_U$,
	and thus $dn \cdot [Z] = d \cdot \gb_{|U}$.
\end{proof}
	
	Back to the proof of Lemma~\ref{lem-etadef}, 
	as before let $\{U_i\}$ be a good open cover of 
	$B$ such that $U_{ij} \cnec U_i \cap U_j \subset B^\st$.  
	By Lemma~\ref{lem-Zjcj}, we can write
	$$ d\cdot \gb_{|U_i} = \sum_{j} c_{i,j} [Z_{i,j}]\in H^0(U_i, R^{2g}f_* \bZ)$$
	for some multi-sections $Z_{i,j}$ over $U_i$ 
	and some $c_{i,j} \in \bZ$.
	
	By definition, a 1-cocycle $\{\eta_{ij}\}$ with respect to the open cover $\{U_i\}$ 
	which represents $\eta$ can be constructed as follows. 
	Let $\gb' \in H^0(B, R^{2g}(f \circ \imath)_* \bZ)$ 
	be the image of $\gb \in H^0(B, R^{2g}f_* \bZ)$ under~\eqref{mor-etadef}.
	Then $\gs_i \cnec \sum_j c_{i,j} [Z_{i,j}] \in R^{2g}f_*\ul{D}_{X/B}(g)(U_i)$ 
	is a lift of $d \cdot \gb'_{|U_i}$ by virtue of~\eqref{SE-DeligneJac} and Lemma~\ref{lem-isoFibg}. 
	We define $\{\eta_{ij}\}$ to be the 1-cocycle obtained by taking the \v{C}ech differential of $\{\gs_i\}$, which, by construction, represents $\eta \in H^1(B,\bar{\cJ})$.
	Hence $\Phi(\eta) = [f_{dn}] \in \cE(B,\gD,\bH)$ by Lemma-Definition~\ref{lemdef-multm}
	together with the construction of $\{\eta_{ij}\}$.
\end{proof}

The following lemma is the analogue of~\cite[Proposition 2.5]{ClaudonToridefequiv}
for bimeromorphic $J$-torsors.


\begin{pro}\label{pro-relevtor}
Let $f : X \to B$ be a bimeromorphic $J$-torsor such that $X$ is K\"ahler.
Let $c : H^1(B,\bar{\cJ}) \to H^2(B,j_*\bH)$ be the morphism induced by~\eqref{SE-genJac}. 
There exist $\eta \in H^1(B,\bar{\cJ})$ and $m \in \bZ_{>0}$ 
such that $\Phi(\eta) = [f_m]$ and $c(\eta) = 0 \in H^2(B,j_*\bH)$.
\end{pro}

\begin{proof}
	Up to replacing $X$ by a strong K\"ahler desingularization of it, 
	we can assume that $X$ is a compact K\"ahler manifold and
	the singular fibers of $f$ are normal crossing divisors. 
	The Gysin morphism $f_* : H^{2g}(X,\bQ) \to H^0(B,\bQ)$ has the factorization
\begin{equation}\label{diag-defeta}
\begin{tikzcd}[cramped]
f_* : H^{2g}(X,\bQ)  \arrow[r] & H^0(B, R^{2g}f_* \bQ) \ar[r] & H^0(B,\bQ)
\end{tikzcd}
\end{equation} 
through the restriction to the space of global sections of higher direct image. 
Since $X$ is a compact K\"ahler manifold, 
the composition~\eqref{diag-defeta} is surjective,
so there exists $\ga \in H^{2g}(X,\bZ)$ such that $f_*\ga = n \in \bZ_{>0} \subset H^0(B,\bZ)$.

Let $\imath : X^\st  \hto X$ be the inclusion. 
By Lemmas~\ref{lem-comm} and~\ref{lem-isoFibg} 
we have the commutative diagram
\begin{equation}\label{diag-simp}
\begin{tikzcd}[cramped]
H^{2g}(X,\bZ) \arrow[r] & H^0(B, R^{2g}f_* \bZ)  \arrow[r,"d_2"]  \arrow[d]  & H^2(B, R^{2g-1}f_* \bZ) \arrow[d]  \\
& H^0\(B, R^{2g}(f \circ \imath)_* \bZ\)  \arrow[r, "d_2"]  \arrow[d]  & H^2\(B, R^{2g-1}(f \circ \imath)_* \bZ\) \arrow[d]  \\
&  H^1(B, \bar{\cJ})   \arrow[r,"c"] & H^2(B,j_*\bH).  
\end{tikzcd}
\end{equation}
Let $\eta' \in H^1(B,\bar{\cJ})$ be the image of 
$\ga \in H^{2g}(X,\bZ)$ following the arrows in~\eqref{diag-simp}.
By Lemma~\ref{lem-etadef},
there exists $d \in \bZ_{>0}$ such that if $\eta \cnec d \cdot \eta'$ and $m \cnec dn \in \bZ_{>0}$,
then $\Phi(\eta) = \Phi(f_m)$.

The composition of the arrows of 
the first row of~\eqref{diag-simp} vanishes. 
Indeed, let $\{E_i^{p,q}\}$ denote the Leray spectral sequence associated to 
$f : X \to B$.
As every Leray differential which goes to $E_i^{0,2g}$ is zero whenever $i \ge 2$,
we have $E_\infty^{0,2g} \subset E_3^{0,2g} = \ker(d_2) \subset E_2^{0,2g}$.
Since the first arrow $H^{2g}(X,\bZ) \to H^0(B, R^{2g}f_* \bZ)$
is the composition
$H^{2g}(X,\bZ) \to  E_\infty^{0,2g} \hto E_2^{0,2g}$,
the map vanishes when further composing with $d_2$.
Finally, as $\eta \in H^1(B, \bar{\cJ})$ is the image of some element of $H^{2g}(X,\bZ)$, 
it follows that $c(\eta) = 0$. 
\end{proof}

We will also need the following lemma in the proof of Theorem~\ref{thm-AbFibDefprec}.

\begin{lem}\label{lem-relevtorG}
	Let $f : X \to B$ be a bimeromorphic $J$-torsor and assume that $f$ is $G$-equivariant 
	with respect to the action of some finite group $G$. 
	Let $\eta \in H^1(B,\bar{\cJ})$ be any lift of $f$ by $\Phi$. Then 
	$$\Phi(|G| \cdot \eta) = \Phi\(\sum_{g \in G} g^*\eta\).$$
\end{lem}

\begin{proof}

 As before, let $\{U_i\}$ be a good open cover of $B$ such that $U_{ij} \cnec U_i \cap U_j \subset B^\st$ and that $f_{|X_i} : X_i \cnec f^{-1}(U_i) \to U_i$ has a section $\gs_i : U_i \to X_i$ for every $i$.  We assume that the open cover $\{U_i\}_{i \in I}$ is $G$-invariant and let $G$ act on $I$ by $U_{gi} = g^{-1}(U_i)$. Let $p: \bar{J} \to B$ be the Jacobian fibration associated to $f$ and fix isomorphisms $\eta_i : X_i \to \bar{J}_i$ over $U_i$ sending $\gs_i$ to the 0-section. By construction, $\eta$ is represented by the 1-cocycle $\eta_{ij} =  \eta_i(\gs_i(U_{ij})) - \eta_j(\gs_j(U_{ij}))$.

The $G$-action on $f$ induces a natural $G$-action on $J \to B^\st$ and on $\bar{\cJ}$. Since the $G$-action on $J \to B^\st$ preserves the 0-section, it extends to a meromorphic $G$-action on $\bar{J} \to B$ by~\cite[Proposition 1.6]{NakayamaWeierCnUniv}. For every $g \in G$, let $\psi_g : X \to X$ and $\phi_g : \bar{J} \dto \bar{J}$ denote the action of $g$ on $X$ and on $\bar{J}$ respectively. Then
$$\eta_i \circ \psi_g \circ \eta_{gi}^{-1} \circ \phi_g^{-1} : \bar{J}_i \dto \bar{J}_i \cnec p^{-1}(U_i)$$
is the translation by some section $\eta_i^g$ of $\bar{J}_i \to U_i$ and the 1-cocycle $\{\eta_{ij}\}$ satisfies 
\begin{equation}\label{eq-1coyGcob}
\eta_{ij} - (g^*\eta)_{ij} = \eta_i^g - \eta_j^g.
\end{equation}
where $(g^*\eta)_{ij} \cnec g \cdot \eta_{(gi)(gj)} \in \bar{\cJ}(U_{ij})$, which represents $g^*\eta$. Summing up~\eqref{eq-1coyGcob} over $g \in G$, we have 
$$|G|\cdot \eta_{ij} - \sum_{g\in G}(g^*\eta)_{ij} = \eta^G_i -  \eta_j^G,$$ 
where $\eta^G_i = \sum_{g \in G} \eta_i^g$, which is a section of $\bar{J}_i \to U_i$. So if $X_1 \to B$ and $X_2 \to B$ are the bimeromorphic $J$-torsors twisted by $|G|\cdot \eta_{ij}$ and $\sum_{g\in G}(g^*\eta)_{ij}$ respectively, then the translations $\tr(\eta_i^G) : \bar{J}_i \dto \bar{J}_i$ glue to a bimeromorphic map $X_1 \dto X_2$ over $B$, which proves Lemma~\ref{lem-relevtorG}.
\end{proof}

Finally, we construct the tautological family associated to a $G$-equivariant bimeromorphic $J$-torsor.

\begin{pro-def}\label{pro-def-existfam}
Let $G$ be a finite group and $f : X \to B$ a $G$-equivariant bimeromorphic $J$-torsor. There exists a family of bimeromorphic $J$-torsors
$$ \Pi :  \cX \xto{q}  B \times V \to V \cnec H^1(B, \bar{\cE})^G $$
parameterized by $V$ which satisfies the following properties. 
\begin{enumerate}[label = \roman{enumi})]
\item The family contains $f : X \to B$ as the central fiber and for all $t \in V$, the bimeromorphic $J$-torsor $\cX_t \to B$ parameterized by $t$ corresponds to 
$$\Phi\(\eta(f) + \exp(t)\) \in \cE(B,\gD,\bH)$$
where $\eta(f) \in H^1(B,\bar{\cJ})$ is one (hence any) lift of $[f] \in \cE(B,\gD,\bH)$ by $\Phi$ 
(see Lemma~\ref{lem-surjPhi})
and $\exp : H^1(B, \bar{\cE}) \to H^1(B,\bar{\cJ})$ is the map induced by $\exp : \bar{\cE} \to \bar{\cJ}$ in~\eqref{SE-genJac}.
\item The family $\Pi$ extends the $G$-action on $f$ and is $G$-equivariantly locally trivial over $B$.
\end{enumerate}
The family $\Pi$ is called the \emph{$G$-equivariant tautological family associated to $f$}.
\end{pro-def}
 
\begin{proof}
The construction of the tautological family is similar to the one constructed for smooth torus fibrations. 
By abuse of notation, let $\pr_1$ denote both the first projections $B \times V \to B$ and $B^\st \times V \to B^\st$. Let 
$$\xi \in H^1(B, \bar{\cE}) \otimes H^0(V,\cO_V) \simeq H^1\(B \times V, \pr_1^*\bar{\cE}\) $$ 
be the element corresponding to the inclusion $V \hto H^1(B, \bar{\cE})$. 

 Let $\fU \cnec \{U_i\}_{i \in I}$ be a $G$-invariant good Stein open cover of $B$ such that $U_{ij} \subset B^\st$ for every $i \ne j$ and let $G$ act on $I$ such that $g^{-1}(U_i) = U_{gi}$.  

 \begin{lem}\label{lem-repG0}
 	With respect to the good Stein open cover $\fU \times V = \{U_i \times V\}_{i \in I}$ of $B \times V$, 
 	the element $\xi$ is represented by a $G$-invariant 1-cocycle $\left\{\xi_{ij} : V \to  \bar{\cE}(U_{ij})\right\}$ such that $\xi_{ij}(0) = 0$ (or equivalently, ${\xi_{ij}}_{|U_{ij} \times \{0\}} = 0$). 
 \end{lem}
 \begin{proof}
 	
 	We have the commutative diagram
 	\begin{equation}
 	\begin{tikzcd}[cramped, row sep = 15, column sep = 20]
 	0 \ar[r] & B^1\(\fU \times V, \pr_1^*\bar{\cE} \)^G   \ar[d, "\pi_1"] \ar[r] & Z^1\(\fU \times V, \pr_1^*\bar{\cE} \)^G   \arrow[r] \ar[d, "\pi_2"] & H^1\(B \times V, \pr_1^*\bar{\cE} \)^G \ar[d, "\pi_3"] \ar[r] & 0  \\
 	0 \ar[r] & B^1\(\fU, \bar{\cE} \)^G   \ar[r] & Z^1\(\fU, \bar{\cE} \)^G   \  \arrow[r]  & H^1\(B, \bar{\cE} \)^G  \ar[r] & 0 
 	\end{tikzcd}
 	\end{equation}
 	where $Z^1$ (resp. $B^1$) is the space of \v{C}ech $1$-cocycles (resp. $1$-coboundaries) and
 	the vertical arrows are defined by the restriction to $ B = B \times 0 \subset B \times V$.
 	The rows are exact as the functor $(\bullet)^G$ is left exact and we have
 	$H^1(G, B^1(\fU,\bar{\cE})) = 0$ 
 	(indeed, $H^1(G, B^1(\fU,\bar{\cE}))$ is a $\bC$-vector space and
 	$H^1(G, B^j(\fU,\cF))$ is torsion because $G$ is finite~\cite[Corollary III.10.2]{BrownCohGp})
 	and $H^1(G, B^1(\fU \times V, \pr_1^*\bar{\cE} )) = 0$ 
 	for the same reason. 
 	As $\pi_1$ is surjective, the induced map $\ker(\pi_2) \to \ker(\pi_3)$ is surjective by the snake lemma. Therefore since $\xi_{|B \times \{0\}} = 0 \in H^1\(B, \bar{\cE} \)^G$ by definition of $\xi$, $\xi$ can be represented by a 1-cocycle in $\ker(\pi_2)$, which proves Lemma~\ref{lem-repG0}.
 \end{proof}


Let $p : \bar{J} \to B$ be the Jacobian fibration associated to $f$. 
Recall from~\eqref{eqn-etai'} that we have
biholomorphic maps $\eta_i :  X_i \cnec f^{-1}(U_i)\to \bar{J}_i$
such that $\eta_i \circ \eta_j^{-1} = \tr(\eta_{ij})$ for some \v{C}ech 1-cocycle
$\eta(f) = \{\eta_{ij}\}$ with coefficients in $\bar{\cJ}$.
By construction, we have $\Phi(\eta(f)) = [f] \in \cE(B,\gD,\bH)$. 
 Let $\ti{\eta}_{ij} \in \cJ_{\pr_1^{-1}\bH/B \times V}(U_{ij} \times V)$ be the pullback of $\eta_{ij}$ under $U_{ij} \times V \to U_{ij}$. 
 Let $\left\{\xi_{ij} : V \to \bar{\cE}(U_{ij})\right\}$ be a $G$-invariant 1-cocycle representing $\xi$ as in Lemma~\ref{lem-repG0}. 
 Finally, let
 $$\gl_{ij} \cnec \ti{\eta}_{ij} + \exp(U_{ij} \times V)(\xi_{ij}) \in \cJ_{\pr_1^{-1}\bH/B \times V}(U_{ij} \times V)$$
 where $\exp : \cE_{\pr_1^{-1}\bH/B \times V} \to \cJ_{\pr_1^{-1}\bH/B \times V}$ is the exponential map. 
 
 Let $\bar{J}_i = p^{-1}(U_i)$ and $\bar{J}_{ij} = p^{-1}(U_{ij})$.
  We define  $q : \cX \to B \times V$ to be the fibration obtained by gluing the $\bar{J}_i \times V \to U_i \times V$ along $\bar{J}_{ij} \times V \to U_{ij} \times V$ \emph{via} 
  the 1-cocycle of translations $\tr(\gl_{ij}) : \bar{J}_{ij} \times V \to \bar{J}_{ij} \times V$. By construction, the central fiber of $\Pi$ is $f : X \to B$ and for all $t \in V$, the bimeromorphic $J$-torsor parameterized by $t$ represents $\eta(f) + \exp(t) \in H^1(B,\bar{\cJ})$, which proves $i)$. 


 To construct the $G$-action on $\cX$ and prove $ii)$, we fix  biholomorphic maps $\gl_i : X_i \times V \to \bar{J}_i \times V $  
such that $\gl_i \circ \gl_j^{-1} = \tr(\gl_{ij})$. For every $g \in G$, let $\psi_g^i : X_{gi} \to X_i$ be the restriction to $X_{gi}$ of the action of $g$ on $X$. Define
$$\Psi_g^i  \cnec \gl_i^{-1} \circ \((\eta_i  \circ \psi_g^i \circ \eta_{gi}^{-1}) \times \Id_V\) \circ \gl_{gi} : X_{gi} \times V \to X_i \times V.$$
Then we can glue the $\Psi_g^i$ together and obtain a biholomorphic map $\Psi_g : \cX \to \cX$ such that $g \mapsto \Psi_g$ is a $G$-action on $\cX$ extending the $G$-action on $X$. With this $G$-action on $\cX$, $q$ is $G$-equivariant. It follows from the construction that $\Pi$ is $G$-equivariantly locally trivial over $B$, which proves $ii)$.
\end{proof}%

\section{The tautological family is an algebraic approximation}\label{sec-appalgtau}

We continue to use the notations introduced in Section~\ref{sec-genjac}. Now we prove that the tautological family constructed at the end of Section~\ref{sec-genjac} is an algebraic approximation.

\begin{proof}[Proof of Theorem~\ref{thm-AbFibDefprec}] 
Let $B^\st \subset B$ be the Zariski open subset parameterizing smooth fibers of $f$ and let $J \to B^\st$ be the Jacobian fibration associated to ${f}_{|X^\st} : {X}^\st \cnec f^{-1}(B^\st) \to B^\st$. 
Then $f$ is a $G$-equivariant bimeromorphic $J$-torsor. 
Let $\eta(f) \in H^1(B,\bar{\cJ})$ be a lifting of $[f] \in  \cE(B,\gD,\bH)$ under $\Phi$ (see Lemma~\ref{lem-surjPhi}). 
By Proposition-Definition~\ref{pro-def-existfam}, the $G$-equivariant tautological family 
$$\Pi : {\cX} \to B \times V \to V \cnec  H^1(B, \bar{\cE})^G$$ 
 associated to $f : X \to B$ is a deformation of $f$
 which is $G$-equivariantly locally trivial over $B$,  and each $t \in V$
 parameterizes a bimeromorphic $J$-torsor $f_t : \cX_t \to B$ such that $\Phi\(\eta(f) + \exp(t)\) = [f_t]$. Since the fibers of $f_t$ are projective, by Corollary~\ref{cor-multsecMoibase} it suffices to show that there exists a dense subset $V_\tors \subset V$ parameterizing bimeromorphic $J$-torsors with multi-sections in the tautological family.
 
 For every $\xi \in H^1(B,\bar{\cJ})$, let $f^\xi : X^\xi \to B$ be a bimeromorphic $J$-torsor such that $\Phi(\xi) = [f^\xi] \in \cE(B,\gD,\bH)$. 
 By Proposition~\ref{pro-relevtor}, there exists  $m \in \bZ_{>0}$ such that $[f_m] \in \cE(B,\gD,\bH)$ can be lifted to some element $\eta(f_m) \in H^1(B,\bar{\cJ})$
 satisfying $c(\eta(f_m)) = 0$.
 By Lemma-Definition~\ref{lemdef-multm}, 
 we have $\Phi(m \cdot\eta(f)) = \Phi(\eta(f_m))$. 
 Let $\bar{\eta} \cnec \sum_{g \in G} g^*\eta(f_m)$. For each $t \in V$, there exists a generically finite map
\begin{equation}
\begin{tikzcd}[cramped, row sep = 2.5]
\cX_t \arrow[r, dashed, "\bm"] & X^{m \cdot \(\eta(f) + \exp(t)\)}  \arrow[r,dashed, "\sim"] & 
X^{ \(\eta(f_m) + m\exp(t)\)} \arrow[r,dashed, "\mathbf{|G|}"] & X^{|G| \cdot \(\eta(f_m) + m\exp(t)\)} 
\arrow[r,dashed, "\sim"] & X^{\bar{\eta} + \exp(m \cdot |G| \cdot t)}
\end{tikzcd}
\end{equation}
 over $B$, where the first (resp. third) map is the multiplication by $m$ 
 (resp. by $|G|$), the second map is the bimeromorphic map which follows from
 $\Phi(m \cdot \eta(f)) = \Phi(\eta(f_m))$  
 and the fourth one is the bimeromorphic map given by Lemma~\ref{lem-relevtorG}. So according to Lemma~\ref{lem-torsmulsc}, it suffices to show that the subset 
 $$V_\tors \cnec \left\{t \in V \mid \bar{\eta} + \exp(m \cdot |G| \cdot t) \in H^1(B,\bar{\cJ})_\tors \right\}$$
 is dense in $V$.

As $c(\eta(f_m)) = 0$  and the map $c$ is $G$-equivariant, by the exact sequence
\begin{equation}\label{se-lhej}
\begin{tikzcd}[cramped]
H^1(B,j_*\bH) \ar[r,"\phi_\bZ"] & H^1(B, \bar{\cE}) \ar[r,"\exp"]  & H^1({B}, \bar{\cJ})  \ar[r,"c"] & H^2(B, j_*\bH) 
\end{tikzcd}
\end{equation}
induced by~\eqref{SE-genJac}, there exists $\gb \in H^1(B,\bar{\cE})^G = V$ such that $\exp(\gb) =  \bar{\eta}$. 
Let $\phi: H^1({B},j_*\bH)_\bQ \to H^1({B}, \bar{\cE})$ be the $\bQ$-tensorization of $\phi_\bZ$ and let $W \colonec H^1({B},j_*\bH)^G_\bQ$. Since $G$ is finite, we have $\Ima(\phi)^G = \phi(W)$. By Corollary~\ref{Cor-Zuck}, we have $\Ima(\phi) \otimes \bR = H^1({B}, \bar{\cE})$, so 
$$\phi(W) \otimes \bR = \Ima(\phi)^G \otimes \bR = H^1({B}, \bar{\cE})^G = V.$$ 
In particular since $\phi(W)$ is a $\bQ$-vector space, $\phi(W)$ is dense in $V$, so $\phi(W) - \gb$ is dense in $V$. 

By~\eqref{se-lhej}, we have $\exp(\phi(W)) \subset H^1(B,\bar{\cJ})_\tors$, 
so $\phi(W) - \frac{\gb}{m\cdot |G|} \subset V_\tors$. Therefore $V_\tors$ is dense in $V$.
\end{proof}

\begin{proof}[Proof of Corollary~\ref{cor-AbFibDef}]

First we show that to prove Corollary~\ref{cor-AbFibDef},
we can freely modify $f : X \to B$ along fibers of $f$.

\begin{claim}\label{claim-mod}
	Let $\gS \subset B$ be a finite subset and 
	let $f' : X' \to B$ be a bimeromorphic modification of $f$ 
	such that the underlying map $\mu : X' \dto X$ is an isomorphism over $B \bss \gS$.
	If $f'$ has an algebraic approximation which is locally trivial over $B$,
	then $f$ also has an algebraic approximation which is locally trivial over $B$.
\end{claim}
\begin{proof}
	Let $\cX' \xto{q'} B \times \gD \to \gD$ be an algebraic approximation of $f'$ 
	which is locally trivial over $B$.
	Let $o \in \gD$ be the point parameterizing $f'$
	(namely $q'^{-1}(B \times \{o\}) \to B \times \{o\}$ is  $f' : X' \to B$). 
	Since the deformation is locally trivial over $B$, 
	for every $p \in \gS$, there exist a neighborhood 
	$U_p$ of $p$ and an isomorphism
	$$\phi_p : \cU'_p \cnec q'^{-1}(U_p \times \gD) \simeq f'^{-1}(U_p) \times \gD$$
	over $U_p \times \gD$.
	We can assume that the restriction of $\phi_p$ to 
	$q'^{-1}(U_p \times \{o\}) = f'^{-1}(U_p) \times \{o\}$ is the identity.
	Up to shrinking $U_p$, we can assume that $U_p \cap U_q = \emptyset$ whenever $p \ne q \in \gS$.
	Let 
	$$\mu_p : \cU'_p \simeq f'^{-1}(U_p) \times \gD \dto f^{-1}(U_p) \times \gD =: \cU_p$$
	where the dashed arrow is the bimeromorphic map $\mu_{|f'^{-1}(U_p)} \times \Id_{\gD}$ 
	(over $U_p \times \gD$).
	Finally, let $\cU \cnec q'^{-1}((B \bss \gS) \times \gD)$.
	
	Since each ${\mu_p}_{|\cU'_p \cap \cU}$ is an isomorphism onto its image
	and since $\cU'_p \cap \cU_q' = \emptyset$ whenever $p \ne q$,
	we can glue the bimeromorphic maps 
	$\mu_p : \cU'_p \dto \cU_p$ (over $U_p \times \gD$)
	and $\Id_\cU : \cU \to \cU$ (over $(B \bss \gS) \times \gD$) and form a bimeromorphic map
	$\ti{\mu} : \cX' \dto \cX$ over $B \times \gD$.
	Let $q : \cX \to B \times \gD$ denote the structural map.
	By construction, over $B \times \{o\}$ the restriction
	$q'^{-1}(B \times \{o\}) \dto q^{-1}(B \times \{o\})$ of $\ti{\mu}$ is isomorphic (over $B$) to 
	$\mu : X' \dto X$.
	
	Since 
	$$ q^{-1}(U_p \times \gD) = \cU_p =  f^{-1}(U_p) \times \gD \xto{q_{|\cU_p}} U_p \times \gD \to \gD$$ 
	is clearly locally trivial over $U_p$ for every $p \in \gS$ by construction and since
	$$ q^{-1}((B\bss \gS) \times \gD) = \cU \xto{q_{|\cU} = q'_{|\cU}} (B\bss \gS) \times \gD \to \gD$$ 
	is locally trivial over $B\bss \gS$ by assumption,
	it follows that their gluing 
	$\cX \xto{q} B \times \gD \to \gD$ 
	is locally trivial over $B$
	as a deformation of $f : X \to B$.
	Finally, as $\cX' \dto \cX$ is bimeromorphic over $\gD$ and 
	$\cX' \to \gD$ is an algebraic approximation of $f'$, 
	necessarily $\cX \to \gD$ is an algebraic approximation $f$.
\end{proof}

Let $\{U_i\}$ be a good open cover of $B$. Since the fibers of $f : X \to B$ are algebraic, by~\cite[Corollaire du Théorème 2]{Campana-redalg} the restriction of $f$ to $f^{-1}(U_i)$ is Moishezon. Thus for each $i$, there exists a connected multi-section $Z_i \subset X_i  \cnec f^{-1}(U_i)$ of $f_{|X_i}$. Up to refining the open cover, we can assume that each $U_i$ is a disc and the multi-section $Z_i$  is \'etale over $ U_i - \{o_i\}$ where $o_i$ is the center of $U_i$. 
We can also assume that $o_j \in U_i$ if and only if $i = j$.


Let $\gS \colonec \{o_i \mid i \in I\}  \subset B$ be the set of centers of $U_i$. Let $d_i$ denote the degree of $Z_i \to U_i$ and $d \colonec \lcm \{d_i\}$. Up to adding more points to $\gS$, the fundamental group $\pi_1(B \bss \gS)$ is free, so there exists a cyclic cover $r : \ti{B} \to B$ of degree $d$ branched along $\gS$ by Riemann's extension theorem; let $G \cnec \Gal(\ti{B} / B )$ denote the corresponding Galois group.
Let $\ti{f} : \ti{X} \to \ti{B}$ denote the base change of $f : X \to B$ by $\ti{B} \to B$. 
By construction, the fibration $\ti{f}$ is $G$-equivariant and a general fiber of $\ti{f}$ is still an abelian variety. 
Also, the pre-image of the multi-section $Z_i \subset X_i \to U_i$ in $\ti{X}$ contains a local section of $\ti{f} : \ti{X} \to \ti{B}$ for each $i$. 
Finally, let $\nu : \ti{X}' \to \ti{X}$ be 
a strong $G$-equivariant K\"ahler desingularization of $\ti{X}$ (see Theorem~\ref{thm-mindesingG}).

By construction, $\ti{f}' : \ti{X}' \xto{\nu} \ti{X} \xto{\ti{f}} \ti{B}$ 
is a $G$-equivariant fibration satisfying the hypotheses of Theorem~\ref{thm-AbFibDefprec}.
By Theorem~\ref{thm-AbFibDefprec}, there exists an algebraic approximation
$$\Pi : \ti{\cX}' \to \ti{B} \times V \to V$$
 of $\ti{f}'$ which is $G$-equivariantly locally trivial over $\ti{B}$. 
 So the quotient of $\Pi$ by $G$ is an algebraic approximation of $\ti{X}'/G \to B$, 
 which is locally trivial over $B$ by Lemma~\ref{lem-Gquotloctriv}.

Finally, since $\ti{f} : \ti{X} \to \ti{B}$ is smooth over $\ti{B} \bss r^{-1}(\gS \cup \gS')$ 
where $\gS' \subset B$ is the finite subset 
parameterizing singular fibers of $f : X \to B$,
and since $\nu : \ti{X}' \to \ti{X}$ is a strong K\"ahler desingularization of $\ti{X}$,
the bimeromorphic modification $\nu :  \ti{X}' \dto \ti{X}$ is an isomorphism over 
$\ti{B} \bss r^{-1}(\gS \cup \gS')$.
So the quotient $\ti{X}'/G$ is isomorphic to $X$ over $B \bss (\gS \cup \gS')$. 
Hence by Claim~\ref{claim-mod}, 
$f:X \to B$ has an algebraic approximation
which is locally trivial over $B$.
\end{proof}%

\section{Appendix: A lemma about the Grothendieck spectral sequences}

Let $F : \cA \to \cB$  be a left exact functor of small abelian categories. We assume that $\cA$ has enough injectives. The following result is presumably well-known, yet a proof is difficult to find in the literature.

\begin{lem}\label{lem-appenComm}
Let 
\begin{equation}\label{diag-LMNexact}
\begin{tikzcd}[cramped, row sep = 2.5]
0 \arrow[r] & L^\bullet \arrow[r] & M^\bullet \arrow[r] & N^\bullet \arrow[r] & 0  
\end{tikzcd}
\end{equation}
be a short exact sequence of bounded complexes in $\cA$, which induces the long exact sequence
\begin{equation}
\begin{tikzcd}[cramped, row sep = 2.5]
\cdots  \arrow[r] & H^{q-1}(L^\bullet)  \arrow[r,"\phi"] & H^{q-1}(M^\bullet)  \arrow[r] & H^{q-1}(N^\bullet) \arrow[r, "\rho"] & H^{q}(L^\bullet)   \arrow[r] & H^{q}(M^\bullet)  \arrow[r,"\psi"] &  H^{q}(N^\bullet)  \arrow[r] & \cdots 
\end{tikzcd}
\end{equation}
Then 
\begin{equation}\label{diag-lemtechcom}
\begin{tikzcd}[cramped, row sep = 20]
R^pF(H^q(M^\bullet))  \arrow[rr,"d_2"] & & R^{p+2}F(H^{q-1}(M^\bullet)) \arrow[d]  \\
R^pF(\ker(\psi)) \arrow[u, "h"]   \arrow[r, "f"] & R^{p+1}F(\Ima(\rho))   \arrow[r,"g"] & R^{p+2}F(\coker(\phi) )  
\end{tikzcd}
\end{equation}
is commutative where the morphism in the top row is the $d_2$ map of the spectral sequence
$$E_2^{p,q} = R^pF(H^q(M^\bullet)) \Rightarrow R^{p+q}F(M^\bullet),$$
and the bottom row are the connecting morphisms induced by the exact sequences 
\begin{equation}\label{SE-appen1}
\begin{tikzcd}[cramped, row sep = 2.5]
0 \arrow[r] & \coker(\phi) \arrow[r] & H^{q-1}(N^\bullet) \arrow[r] & \Ima(\rho) \arrow[r] & 0 
\end{tikzcd}
\end{equation}
\begin{equation}\label{SE-appen2}
\begin{tikzcd}[cramped, row sep = 2.5]
0 \arrow[r] & \Ima(\rho) \arrow[r] & H^{q}(L^\bullet) \arrow[r] & \ker(\psi) \arrow[r] & 0.
\end{tikzcd}
\end{equation}
 
\end{lem}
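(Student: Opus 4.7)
My approach is to resolve the whole short exact sequence~\eqref{diag-LMNexact} by a compatible Cartan--Eilenberg (CE) resolution so that both the $d_2$ differential and the two connecting maps $f, g$ become chain-level operations inside a single double complex, reducing commutativity to a diagram chase. Since $\cA$ has enough injectives, I would pick a short exact sequence of CE resolutions
$$
0 \to I_L^{\bullet,\bullet} \to I_M^{\bullet,\bullet} \to I_N^{\bullet,\bullet} \to 0
$$
that is split in each bidegree; applying $F$ termwise yields a short exact sequence of double complexes in $\cB$ whose total complexes compute $R^\bullet F(L^\bullet), R^\bullet F(M^\bullet), R^\bullet F(N^\bullet)$, and whose vertical cohomologies yield injective resolutions of $H^q(L^\bullet), H^q(M^\bullet), H^q(N^\bullet)$ fitting into a short exact sequence that resolves the long exact sequence in cohomology.

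\textbf{Execution.} First I would recall the standard ``staircase'' description of $d_2$ in the second spectral sequence of $F(I_M^{\bullet,\bullet})$: a class in $R^pF(H^q(M^\bullet))$ is represented by $\tilde{\alpha} \in F(I_M^{p,q})$ with $d_v\tilde{\alpha} = 0$ and $d_h\tilde{\alpha} = d_v\beta$ for some $\beta \in F(I_M^{p+1,q-1})$, and $d_2$ sends its class to $[d_h\beta] \in E_2^{p+2,q-1}$. For $\gamma \in R^pF(\ker\psi)$, since $\ker\psi$ is resolved by the image of $H^q_v(I_L^{\bullet,\bullet}) \hookrightarrow H^q_v(I_M^{\bullet,\bullet})$, we can represent $h(\gamma)$ by an element $\tilde{\alpha}$ lying in $F(I_L^{p,q}) \subset F(I_M^{p,q})$ (up to modifying by vertical boundaries). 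Then $d_h\tilde{\alpha} \in F(I_L^{p+1,q})$, and its vertical-cohomology class represents precisely $f(\gamma) \in R^{p+1}F(\Ima\rho)$ by the snake-lemma formula applied to~\eqref{SE-appen2}. Continuing the staircase, we have $d_h\tilde{\alpha} = d_v\beta$ with $\beta \in F(I_M^{p+1,q-1})$, and $d_h\beta \in F(I_M^{p+2,q-1})$ reduced modulo $F(I_L^{p+2,q-1})$ lands in $F(I_N^{p+2,q-1})$; its vertical class represents $g(f(\gamma)) \in R^{p+2}F(\coker\phi)$ via the snake-lemma formula for~\eqref{SE-appen1}. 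Since the vertical arrow $R^{p+2}F(H^{q-1}(M^\bullet)) \to R^{p+2}F(\coker\phi)$ is induced by the surjection $H^{q-1}(M^\bullet) \twoheadrightarrow \coker\phi$ and corresponds on the chain level to passage from $F(I_M)$ to $F(I_M)/F(I_L) = F(I_N)$, this is exactly the image of $d_2(h(\gamma)) = [d_h\beta]$, which proves commutativity.

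\textbf{Main obstacle.} The main technical difficulty is the bookkeeping: constructing the CE resolution $I_M^{\bullet,\bullet}$ together with the compatible CE resolutions $I_L^{\bullet,\bullet}, I_N^{\bullet,\bullet}$ so that the induced injective resolutions on vertical cohomologies decompose compatibly with the filtration pieces $\Ima\rho \subset H^q(L^\bullet)$, $\ker\psi \subset H^q(M^\bullet)$, and $\coker\phi \subset H^{q-1}(M^\bullet)$, ensuring that the staircase lifts $\tilde{\alpha}, \beta$ used to compute $d_2$ can \emph{simultaneously} serve as the snake-lemma representatives used to compute $f$ and $g$. This is achieved by iterating the horseshoe lemma, after which the chain-level chase becomes mechanical.
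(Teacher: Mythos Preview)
Your proposal is correct and follows essentially the same approach as the paper: choose a short exact sequence of Cartan--Eilenberg resolutions of~\eqref{diag-LMNexact}, apply $F$, and then verify by a staircase diagram chase that the same pair of lifts $(\tilde{\alpha},\beta)$ used to compute $d_2$ also serves as the snake-lemma representatives for the connecting maps $f$ and $g$, so that both routes land on the class of $d_h\beta$. The paper's write-up differs only cosmetically (indices $(q,p)$ rather than $(p,q)$, and it realizes $\coker(\phi)^\bullet$ as a quotient of $H^{q-1}_{\delta_1}(M^{\bullet,\bullet})$ rather than passing to $I_N$), and it simply asserts the existence of the compatible CE resolution that you correctly isolate as the only nontrivial bookkeeping step.
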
 

\begin{proof}

Let 
\begin{equation}\label{diag-CEexact}
\begin{tikzcd}[cramped, row sep = 2.5]
0 \arrow[r] & (L^{\bullet,\bullet}, \delta_1,\delta_2) \arrow[r] & (M^{\bullet,\bullet}, \delta_1,\delta_2)  \arrow[r] & (N^{\bullet,\bullet}, \delta_1,\delta_2) \arrow[r] & 0  
\end{tikzcd}
\end{equation}
be an exact sequence of Cartan-Eilenberg resolutions extending~\eqref{diag-LMNexact}. We define $I^{\bullet,\bullet} \colonec F(L^{\bullet,\bullet})$, $J^{\bullet,\bullet} \colonec F(M^{\bullet,\bullet})$ and $K^{\bullet,\bullet} \colonec F(N^{\bullet,\bullet})$. Since there will be no ambiguity, let $\delta_i$ also denote $F(\delta_i)$.
We set 
$$\ker(\psi)^\bullet \colonec \ker(H^q(M^{\bullet,\bullet}, \delta_1) \to H^q(N^{\bullet,\bullet}, \delta_1)), $$ 
$$\Ima(\delta)^\bullet \colonec \Ima(H^{q-1}(N^{\bullet,\bullet}, \delta_1) \to H^q(L^{\bullet,\bullet}, \delta_1)), $$
$$\coker(\phi)^\bullet \colonec \coker(H^{q-1}(L^{\bullet,\bullet}, \delta_1) \to H^{q-1}(M^{\bullet,\bullet}, \delta_1)), $$
 which are injective resolutions of $\ker(\psi)$, $\Ima(\delta)$ and $\coker(\phi)$ respectively.

The map $g \circ f$ is constructed as follows.
Let $\ga \in R^pF(\ker(\psi))$ which is represented by some element $\ga_0 \in F(\ker(\psi)^p)$. By the right-exactness of~\eqref{SE-appen2}, $\ga_0$ is further represented by some element $\tilde{\ga}$ in the image of the monomorphism $I^{q,p} \to J^{q,p}$. By definition, $f(\ga)$ is 
 represented by some element in $F(\Ima(\rho)^{p+1})$ which is further represented by $\delta_2^{q,p}(\tilde{\ga})$ as an element in $I^{q,p+1}$. Now by the right-exactness of~\eqref{SE-appen1}, there exists $\gb_0 \in K^{q-1,p+1}$ which represents a pre-image of the class of $\delta^{q,p}_2(\tilde{\ga})$ in $H^{q}(I^{\bullet,p+1},\delta_1)$ under the morphism $H^{q-1}(K^{\bullet,p+1},\delta_1) \to H^{q}(I^{\bullet,p+1},\delta_1)$. Since $J^{q-1,p+1} \to K^{q-1,p+1}$ is surjective, we can find $\gb \in J^{q-1,p+1}$ mapping to $\gb_0 \in K^{q-1,p+1}$. Again by definition, the class of $ \delta^{q-1,p+1}_2(\gb)$ in $F(\coker(\phi)^{p+2})$ is an element representing $(g\circ f)(\ga)$.
 
 Next we recall the construction of the map $d_2$. Let 
$$\ga \in R^pF(H^q(M^\bullet)) \simeq H^{p}\(H^q(J^{q,p}, \delta_1),\delta_2\)$$ 
which is represented by $\tilde{\alpha} \in  J^{q,p} $. Since $\delta_2^{q,p}(\tilde{\alpha}) = 0$ in $H^q(J^{\bullet,p+1}, \delta_1)$ there exists $\gb \in J^{q-1,p+1}$ such that 
$$\delta_1^{q-1,p+1}(\gb) = \delta_2^{q,p}(\tilde{\alpha}) \in J^{q,p+1}. $$ 
Let $\gb' \colonec \delta_2^{q-1,p+1}(\gb)$. Note that 
$$\delta_1^{q-1,p+2}(\gb') = \delta_2^{q,p+1}\delta_1^{q-1,p+1}(\gb) = \delta_2^{q,p+1}\delta_2^{q,p}(\tilde{\ga}) = 0,$$ we have $\gb' \in \ker \delta_1^{q-1,p+2}$. Let $\tilde{\gb}$ be the class of $\gb'$ in $H^{q-1}(K^{\bullet,p+2}, \delta_1)$. Since 
$$ \delta_2^{q-1,p+2} (\gb') =  \delta_2^{q-1,p+2} \delta_2^{q-1,p+1}(\gb) = 0,$$
we have 
$$\tilde{\gb} \in \ker\(\delta_2 : H^{q-1}(J^{\bullet,p+2}, \delta_1) \to H^{q-1}(J^{\bullet,p+3}, \delta_1)\),$$ and $d_2(\ga)$ is defined to be the class of $\tilde{\gb}$ in $R^{p-1}F(H^{q+2}(M^\bullet))\simeq H^{p-1}\(H^{q+2}(J^{q+2,p-1}, \delta_1),\delta_2\)$. 

The construction of $d_2(\ga)$ is independent of the choices of $\tilde{\ga}$ and $\gb$. If $\ga = h(\ga')$ for some $\ga \in R^pF(\ker(\psi))$, then we can choose the same $\tilde{\ga}$ and $\gb$ as in the construction of $(g\circ f)$. Therefore $(g\circ f)(\ga')$ and $(d_2 \circ h)(\ga')$ are both represented by $\delta^{q-1,p+1}_2(\gb)$, which proves that~\eqref{diag-lemtechcom} is commutative.
\end{proof}


%

\section*{Acknowledgement}
This work was carried out while the author was first supported by the SFB 647: "Raum - Zeit - Materie. Analytische und Geometrische Strukturen" at the Humboldt University in Berlin then by the SFB/TR 45 "Periods, Moduli Spaces and Arithmetic of Algebraic Varieties" of the DFG (German Research Foundation) at the University of Bonn.  I am grateful to F. Campana, B. Claudon, and F. Gounelas for general discussions and e-mail correspondences related to this work. 
Last but not least, I would like to thank the referee for carefully reading the article and for
her/his valuable questions and suggestions.

\bibliographystyle{plain}
\bibliography{FibAb}

\end{document}